\documentclass[letterpapper,twoside]{article}
\usepackage{graphicx} % Required for inserting images
\usepackage[margin=2.5cm]{geometry}
\usepackage[export]{adjustbox}

\usepackage{amsmath,amsfonts,amssymb,amsthm,mathabx,mathtools}
\usepackage[utf8]{inputenc}
\usepackage{graphicx,caption,subcaption}
\graphicspath{{images/}}
\usepackage{xcolor}

\usepackage{titlecaps}
\Addlcwords{a is with of for on in are and its to the by under around an}

\newcommand{\mytitle}{ \titlecap{%
Effective Stability of Near-Rectilinear Halo Orbits\\ in the Earth-Moon System
}}

\newcommand{\myshorttitle}{%\titlecap{%
Effective Stability of NRHOs in the E-M system
}
\usepackage[hidelinks]{hyperref}
\hypersetup{
bookmarksnumbered=true,
bookmarksopen=true,
pdftitle={\myshorttitle{}},
pdfauthor={J. Gimeno and L. Peterson}
}
\usepackage{orcidlink}

\usepackage{xparse}
\newcommand{\titlecapheading}[1]{%
  \texorpdfstring{\titlecap{#1}}{#1}%
}
\let\oldsection\section
\RenewDocumentCommand{\section}{s o m}{%
  \IfBooleanTF{#1}
    {\oldsection*{\titlecapheading{#3}}}
    {%
      \IfNoValueTF{#2}
        {\oldsection{\titlecapheading{#3}}}
        {\oldsection[\titlecapheading{#2}]{\titlecapheading{#3}}}%
    }%
}
\let\oldsubsection\subsection
\RenewDocumentCommand{\subsection}{s o m}{%
  \IfBooleanTF{#1}
    {\oldsubsection*{\titlecapheading{#3}}}
    {%
      \IfNoValueTF{#2}
        {\oldsubsection{\titlecapheading{#3}}}
        {\oldsubsection[\titlecapheading{#2}]{\titlecapheading{#3}}}%
    }%
}
\let\oldsubsubsection\subsubsection
\RenewDocumentCommand{\subsubsection}{s o m}{%
  \IfBooleanTF{#1}
    {\oldsubsubsection*{\titlecapheading{#3}}}
    {%
      \IfNoValueTF{#2}
        {\oldsubsubsection{\titlecapheading{#3}}}
        {\oldsubsubsection[\titlecapheading{#2}]{\titlecapheading{#3}}}%
    }%
}

\usepackage{comment} 

\newcommand{\imag}{\texttt{i}}
\newcommand{\muem}{\mu_{\texttt{EM}}}
\newcommand{\dop}{\mathrm{D}} % derivative operator
\newtheorem{theorem}{Theorem}%  meant for continuous numbers
\newtheorem{corollary}[theorem]{Corollary}
\newtheorem{proposition}[theorem]{Proposition}% 
\newtheorem{lemma}[theorem]{Lemma}% 

\theoremstyle{remark}

\title{\mytitle{}}
\author{Joan Gimeno\,\orcidlink{0000-0002-8707-6379}\textsuperscript{(1)} \and 
Luke T. Peterson\,\orcidlink{0000-0003-2648-8094}\textsuperscript{(2,\textasteriskcentered)}
}
\date{\today}

\begin{document}

\maketitle

{\small
\begin{itemize}
\renewcommand{\itemsep}{0pt}
\item [(1)] Departament de Matem\`atiques i Inform\`atica, %
Universitat de Barcelona, %
Gran Via de les Corts Catalanes, 585, 08007 Barcelona, %
Spain, \verb+joan@maia.ub.es+ 
\item [(2)] Department of Aerospace Engineering and Engineering Mechanics, %
University of Texas at Austin, %
2617 Wichita Street North, Austin, TX 78712, USA %
\verb+ltp@utexas.edu+ 
\item[\textsuperscript\textasteriskcentered] {\footnotesize
  Corresponding Author}
\end{itemize}
}

\begin{abstract}
Near-rectilinear halo orbits (NRHOs) around Earth-Moon $L_2$ in the Circular Restricted 3-Body Problem (CR3BP) exhibit a complex dynamical landscape, featuring a band of normally elliptic orbits embedded within regions of strong instability. This coexistence of stable and unstable dynamics, amplified by the numerical sensitivity associated with close lunar passages, makes the long-term behavior of trajectories near NRHOs a delicate and intrinsically nonlinear problem. Understanding the effective stability of these elliptic orbits is therefore a critical challenge, lying at the intersection of local normal form theory and global instability mechanisms. 

To quantify finite-time confinement, we formulate a rigorous framework for effective stability using discrete Poincaré maps. By employing jet transport to compute high-order Taylor expansions, we construct explicit polynomial normal forms. We derive discrete Nekhoroshev-type estimates by identifying the optimal normalization order, which balances the asymptotic convergence of the map's analyticity domain against the cumulative penalty of low-order small divisors. 

Applying this framework to the Earth-Moon system, we map the resulting geometric limits directly into physical spatial coordinates. Crucially, we demonstrate that for practical mission lifetimes (e.g., 10-50 years), the required stability is vastly shorter than the characteristic Nekhoroshev accumulation time. Consequently, the effective stability region is not constrained by time-dependent exponential drift, but is instead governed entirely by the maximum analytical domain of the optimized normal form. These derived spatial envelopes establish explicit geometric boundaries for the intrinsic local stability of elliptic NRHOs, providing a rigorous mathematical characterization of their nonlinear confinement within the CR3BP. 

\end{abstract} 

\vspace{0.5cm}

{\footnotesize
\noindent \textbf{Keywords:} Near-Rectilinear Halo Orbits; Circular Restricted Three-Body Problem; Effective Stability; Discrete Normal Forms; Jet Transport  
}

\pagestyle{myheadings}
\markboth{\myshorttitle{}}{ J. Gimeno and L. Peterson }
\newpage
\phantomsection\pdfbookmark[1]{\contentsname}{toc}
{\small\tableofcontents}

\newpage 

%%%%%%%%%%%%%%%%%%%
% I. Introduction %
%%%%%%%%%%%%%%%%%%%
\section{Introduction}

% I.A. Literature review
\subsection{Motivation and Background}

% I.A.1.  Near-rectilinear Halo Orbits in the Earth-Moon System 
\subsubsection{Near-Rectilinear Halo Orbits in the Earth-Moon System}
Identifying orbits with advantageous operational properties is crucial for future long-term cislunar infrastructure that will serve as communication hubs, science laboratories, and staging points for deep space missions. Premier candidates for these next-generation architectures are near-rectilinear halo orbits (NRHOs) around the Earth-Moon (EM) $L_2$ point \cite{zimovan2017near,parrish2020near}. As a family, these orbits provide excellent access to the lunar surface paired with minimal eclipsing by the Earth's shadow, making them ideal for sustained operations \cite{howell1984almost,williams2017targeting}. However, the present a highly complex dynamical landscape. When modeled within the circular restricted 3-body problem (CR3BP)--a Hamiltonian system--the majority of Earth-Moon $L_2$ halo orbits are linearly unstable. They are governed by prominent stable and unstable manifolds that dictate the chaotic flow of the local phase space. Yet, embedded within this highly unstable family exists a specific, continuous band of NRHOs that exhibit purely elliptic normal behavior (see Figure \ref{fig:IntroFigure}). These orbits within the linearly stable region offer utility in mission planning as ideal long-term staging orbits, while also generating interest from a dynamical systems perspective due to the presence of Lagrangian ``sticky'' tori that leads to slow diffusion times \cite{perry1994kam}. In particular, this relatively small band of normally elliptic halo orbits, surrounded by a largely hyperbolic family, begs the fundamental question: how can we rigorously quantify the long-term stability of these normally elliptic orbits?

\begin{figure}[h!]
    \centering
    \begin{subfigure}{.45\textwidth}
        \centering
        \includegraphics[width=\linewidth]{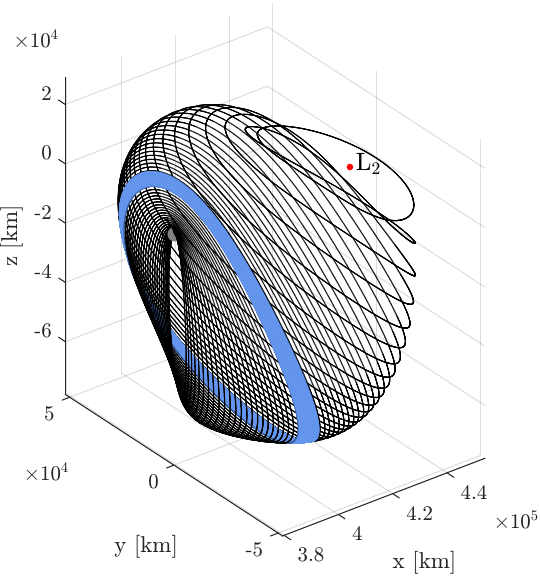}
        \caption{EM $L_2$ Southern Halo Orbits}
        \label{fig:OrbitPlot}
    \end{subfigure}%
    \hfill
    \begin{subfigure}{.53\textwidth}
        \centering
        \includegraphics[width=\linewidth]{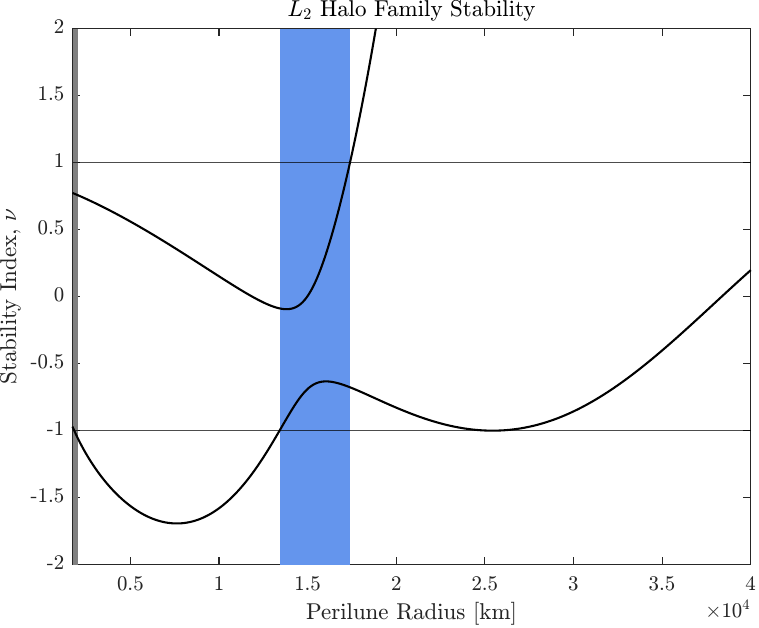}
        \caption{Stability of $L_2$ Halo Orbits}
        \label{fig:StabilityPlot}
    \end{subfigure}
    \caption{Normally elliptic $L_2$ halo orbits (blue) surrounded by linearly unstable orbits. Note that the gray bar in the right figure indicates the radius of the Moon. Right figure inspired by Spree n\'ee Zimovan et al. \cite{zimovan2017near}.}
    \label{fig:IntroFigure}
\end{figure}

% I.A.2. Effective Stability in Hamiltonian and Nearly-Integrable Systems 
\subsubsection{Effective Stability in Hamiltonian and Nearly-Integrable Systems}
The study of long-term confinement in nearly-integrable systems is historically bifurcated into two regimes: perpetual stability and effective stability \cite{delshams1996effective}. KAM (Kolmogorov-Arnold-Moser) theory guarantees perpetual stability for a large measure of initial conditions, proving the survival of invariant tori under small perturbations \cite{kolmogorov1954conservation,Arnold63a,moser1962invariant,Llave01}. However, KAM tori persist for a nowhere-dense Cantor set \cite{jorba2001fine}; consequently, from a practical perspective, realistic Solar System parameters are not sufficiently small to guarantee the survival of these invariant structures. In the true ephemeris model, the invariant tori of the CR3BP are destroyed, leaving behind only dynamical shadows of these structures \cite{gomez_dynamics_2001-3,gidea2007shadowing}, and rendering trajectories vulnerable to slow chaotic drift through the resonance web \cite{arnol2020instability,morbidelli2002modern}. This fundamental limitation of KAM theory in practical situations motivated the shift toward computing effective stability times \cite{celletti2007kam,jorba1998numerical}. 

Conversely, the concept of effective stability, pioneered by Nekhoroshev in \cite{nekhoroshev1977exponential}, focuses on bounding the drift of trajectories over exponentially long, but finite, time intervals. Following Nekhoroshev's original theorem, the rigorous bounds, optimal stability exponents, and geometric confinement conditions for continuous Hamiltonian flows were successively refined \cite{benettin1985proof,lochak1992canonical,poschel1993nekhoroshev}. These collective proofs established that for an analytic, nearly-integrable Hamiltonian system satisfying specific geometric conditions (such as steepness or quasi-convexity), the variation in the action variables remains strictly bounded for a time $T$ that scales exponentially with the inverse of the perturbation size $\epsilon$, i.e., $T \, \propto \, \exp(\epsilon^{-a})$. Meanwhile, effective stability estimates were proved for realistic systems in celestial mechanics, such as the region around the triangular points of the CR3BP \cite{giorgilli1989effective,benettin1999nekhoroshev}, with applications to the Sun-Jupiter system, motivated by the observation of the Trojan asteroids \cite{nicholson1961trojan}. While classical effective stability was developed for continuous Hamiltonian flows, extending these bounds to discrete symplectic mappings is essential for the present work, as periodic orbits are naturally studied as fixed points of a Poincar\'e map. Formulated by Turchetti \cite{turchetti1990nekhoroshev} and Bazzani \cite{bazzani1987normal,bazzani1989nekhoroshev,bazzani1991first}, then further developed by Efthymiopoulos \cite{efthymiopoulos2005formal,efthymiopoulos2005optimized,efthymiopoulos2008connection}, the foundational framework for discrete Nekhoroshev estimates demonstrates that in the discrete setting, the accumulation of remainder terms per map iteration drives the topological drift, requiring a careful algebraic construction to bound the escape time of the discrete trajectory. 

% I.A.3. Normal Forms and Long-Time Stability in Celestial Mechanics 
\subsubsection{Normal Forms and Long-Time Stability in Celestial Mechanics}
A critical distinction must be made when applying these classical stability theorems to cislunar astrodynamics. In celestial mechanics problems, such as the planetary $N$-body problem, the dynamical system is often formulated as globally nearly-integrable \cite{chierchia2011planetary,yalinewich2020nekhoroshev}; the dynamics are dominated by a massive central body (providing an integrable Keplerian part), with the other bodies treated as inherently small perturbations globally scaled by a mass parameter $\epsilon$. By contrast, the Earth-Moon CR3BP in the regime of the NRHOs is a strongly non-integrable, highly nonlinear system. The gravitational influence of the Moon during close perilune passages is not a small perturbation, and the system admits no global nearly-integrable decomposition \cite{Szebehely67}. 

However, while the global system is non-integrable, the dynamics strictly local to a stable periodic orbit can be constructed as a nearly-integrable system. By expanding the flow as a Taylor series centered on the fixed point of the corresponding Poincar\'e map, the linear dynamics of the center manifold (an integrable twist map) serve as the unperturbed, integrable part. The higher-order nonlinear terms of the expansion subsequently acts as the perturbation, intrinsically scaled not by a physical mass parameter, but by the spatial distance from the reference orbit \cite{gimeno2025explicit}. This conceptual pivot--synthesizing near-integrability from the local series expansion--allows the powerful machinery of KAM and Nekhoroshev theories to be applied to strongly globally non-integrable models in celestial mechanics. Moreover, this perspective has been illustrated by several authors, including Giorgilli et al. \cite{giorgilli1989effective} and Jorba \& Villanueva \cite{jorba1998numerical}, which particularly inspire the present work; yet, the methodology presented here builds off of Gimeno et al. \cite{gimeno2025explicit} and is more generic mathematically and more readily applied for computations in realistic systems than previous works. 

The primary tool for deriving effective stability bounds is normal form theory. By constructing a sequence of near-identity coordinate transformations, normal forms systematically eliminate certain perturbation terms up to a chosen truncation order, often (unless otherwise specified) isolating the completely integrable dynamics. Historically, in celestial mechanics, these transformations were computed using the method of Lie series or Lie transforms (e.g., Deprit \cite{deprit1969canonical}, Gustavson \cite{gustavson1966on}) \cite{giorgilli2022notes,Jorba99a}. However, executing these methods were limited at the time by massive symbolic algebraic manipulation, leading to exponential expression swell and restricting explicit computations to relatively low orders. 

A critical recent innovation in the computation of normal forms is the application of jet transport (sometimes referred to as Taylor differential algebra) \cite{gimeno2025explicit,gimeno2023numerical}. Rather than relying on symbolic manipulation or finite differencing, jet transport rigorously propagates the Taylor expansion of the flow along the reference trajectory using automatic differentiation. As demonstrated in Gimeno et al. \cite{gimeno2025explicit}, this allows for the explicit, high-order computation of the Poincar\'e map and its derivatives (and consequently its normal form) to arbitrary precision. This recent advancement enables the rigorous evaluation of the map's high-order remainder tail, shifting Nekhoroshev estimates from pen-and-paper theorems into semi-analytical computations for rapid applications to realistic and relevant systems.

% I.B. Main Questions and Summary of Results 
\subsection{Main Questions and Summary of Results}
The coexistence of linear stability and severe nonlinearity in the Earth-Moon NRHOs motivates the central question of this work: Given a macroscopic neighborhood around a normally elliptic periodic orbit, what is the guaranteed, unperturbed survival time of a trajectory before it escapes? Conversely, what is the effective stability region, and to what extent does it depend on the escape time? 

In this paper, we bridge rigorous dynamical systems theory with numerical cislunar astrodynamics to answer these questions explicitly. Our main contributions are as follows:
\begin{enumerate}
    \item[1.] \textbf{Discrete Effective Stability Bound:} We formulate and prove a rigorous effective stability theorem (Theorem 8) for an elliptic fixed point of a general (symplectic) Poincar\'e map. By optimizing the truncation order of the discrete normal form, we derive a strict lower bound on the survival time $T_{\text{eff}}(a)$ as a continuous function of the initial confinement radius $a$.
    \item[2.] \textbf{Physical Confinement Mechanism:} We demonstrate that, despite the exact symplectic structure being broken at the truncation order by explicit polynomial coordinate transformations, the trajectory remains rigorously confined by solely bounds on the orbital radii, independent of strictly canonical action variables. 
    \item[3.] \textbf{Application to Earth-Moon $L_2$ NRHOs:} Utilizing jet transport, we explicitly compute the high-order discrete normal forms for a periodic orbit in the stable band of Earth-Moon $L_2$ NRHOs. By evaluating the geometric bounds of our optimized normal form, we map the maximum analytical confinement radius directly to physical spatial domains. Because the theoretical stability times within this region vastly exceed practical mission lifetimes, these derived effective stability regions define the fundamental limits of unperturbed, autonomous confinement within the CR3BP.  
\end{enumerate}
Through these results, we establish a fully explicit, computable framework for bounding the nonlinear behavior of spacecraft near elliptic periodic orbits, providing a mathematically rigorous foundation for NRHO mission lifetimes and far beyond.

%%%%%%%%%%%%%%%%%%%%%%%%%%%%%%%%%%%%%%%%%%%%%%%%%%
% II. Preliminaries and Normal Form Construction %
%%%%%%%%%%%%%%%%%%%%%%%%%%%%%%%%%%%%%%%%%%%%%%%%%%
\section{Preliminaries and Normal Form Construction}\label{sec:Preliminaries}
In this section, we establish the framework required to analyze the discrete local dynamics in the macroscopic vicinity of a periodic orbit. To study the long-term behavior of nearby trajectories, we first reduce the continuous flow of the dynamical system to a discrete-time mapping via a suitable defined Poincar\'e section. Section~\ref{sec:PMJT} formally introduces this Poincar\'e map and details the application of jet transport to efficiently compute its high-order Taylor expansion around the fixed point, referring to earlier works of \cite{gimeno2023numerical,gimeno2025explicit} that serve as the basis of our analysis. Taking this local polynomial expansion as a starting point, Section~\ref{sec:Normalization} details the algebraic construction of the discrete normal form, outlining the sequence of near-identity coordinate transformations necessary to systematically eliminate non-integrable nonlinearities, and formally defining the truncation remainder. Finally, Section~\ref{sec:InvariantStructures} explores the geometric properties of the resulting normalized map, which is an integrable twist map, that serves as the foundation for the rigorous stability bounds derived in the subsequent sections. 

% II.A. The Poincar\'e Map and Jet Transport 
\subsection{The Poincar\'e Map and Jet Transport}\label{sec:PMJT}
% 1. Define P & the fixed point x_0 
Throughout Sections 2-4, we consider a general autonomous dynamical system governed by the ordinary differential equation $\dot{x} = f(x)$ with $x \in \mathbb{R}^{n_{\text{sys}}}$, which admits a periodic orbit. To analyze the local dynamics, we reduce the continuous flow to a discrete Poincar\'e map $P\colon \Sigma \to \Sigma$ defined on an $2n$-dimensional transversal surface of section, $\Sigma$, where the periodic orbit corresponds to a fixed point $x_0 = P(x_0)$. Note that we let $2n = n_{\text{sys}}-2$ because we restrict to a constant energy surface. While the Earth-Moon CR3BP (our motivating example) is a Hamiltonian system, the normalization and subsequent effective stability estimates derived in these sections do not require the explicit construction of a globally canonical Hamiltonian. Rather, we invoke the Hamiltonian structure of the underlying vector field $f(x)$ only insofar as it provides the spectral structure of the discrete linear map $\dop P(x_0)$; in particular, it guarantees that the eigenvalues of this Jacobian matrix come in reciprocal pairs $(\lambda_i,\lambda_i^{-1})$, which dictates the form of our linear normalization in the following section. 

% 2. Use jet transport to get the Taylor polynomial G(s) 
To explicitly construct the discrete normal form, the fundamental prerequisite is the high-order Taylor expansion of the Poincar\'e map around the fixed point $x_0$. In the classic celestial mechanics literature, constructing Hamiltonian normal forms around equilibrium points of flows, as in \cite{giorgilli1989effective,Jorba99a,celletti1990stability}, performs the expansion of the system Hamiltonian by Richardson \cite{richardson1980note}. As we do not require an expansion of the full system Hamiltonian, but rather a Taylor expansion of the dynamics on the Poincar\'e section, we use the method of jet transport to construct the series. Let $s \in \mathbb{R}^{2n}$ denote the local coordinates centered on the fixed point. Note that $s$ is sometimes referred to as a ``symbol'' in the literature. Our objective is to compute the polynomial representation $G(s) = P(x_0+s)$ truncated to a desired order $N$. 

Historically, extracting the high-order derivatives of a map derived from a continuous, strongly nonlinear flow (such as the CR3BP) presented a formidable computational bottleneck. Some authors have derived simplified discrete-time models of the CR3BP to avoid the computational challenges, such as S\'andor in \cite{sandor2002symplectic} and later used by Efthymiopolous in \cite{efthymiopoulos2005formal}. Finite-difference approximations can cause catastrophic cancellation and round-off errors at higher-derivatives. 

To overcome this limitation, we utilize the method of jet transport (sometimes referred to as Taylor differential algebra \cite{berz1998verified}). Rather than evaluating the dynamical system using standard floating-point arithmetic on discrete numerical states, jet transport redefines the arithmetic operations to act on the space of truncated Taylor polynomials (referred to as jets) \cite{gimeno2023numerical}. If two functions can be represented by their $N$-th order Taylor series, any algebraic operation or standard intrinsic function (e.g., sine, exponential, square root) applied to them yields a new $N$-th order polynomial, with the truncated algebra handled automatically and up to machine precision.

In practice, instead of integrating a single initial condition $x_0$, we initialize the numerical integration with the polynomial state $X_0 = x_0 + s$, where $s$ acts as an algebraic variable (a symbol) representing a continuous local neighborhood of $x_0$. We then propagate this polynomial state through the vector field $\dot{x} = f(x)$ using a high-order Taylor integrator--in this work, the Taylor package \cite{jorba2005software,Gimeno2022TaylorPackageVersion}. As demonstrated in earlier works developing jet transport for rigorous Taylor integration (e.g., Gimeno et al. \cite{gimeno2023numerical}), this numerical scheme naturally accommodates the polynomial algebra. The integrator step-size and order are controlled dynamically, propagating the entire neighborhood of trajectories. 

Because the Poincar\'e section $\Sigma$ is defined as a spatial section, the time of flight varies for initial conditions within the neighborhood of $x_0$. Hence, the crossing time is itself computed as an $N$-th order polynomial $T(s)$, as outlined in \cite{gimeno2025explicit}. By evaluating the flow at this polynomial time $\varphi_{T(s)}(x_0 + s)$, and projecting the result onto the transversal section (see \cite{gimeno2023numerical} for details), we directly obtain the Taylor expansion of the Poincar\'e map around the fixed point:
\begin{equation}
    P(x_0 + s) = G(s) = \sum_{k\geq 0} G^k(s) = \sum_{k \geq 0} \sum_{|j| = k} G_j^k s^j
\end{equation}
where each $G^k(s)$ is a homogeneous polynomial of degree $k$ in the local variables $s$, and where $j$ is a multi-index. Note that the translation of $x_0$ to the origin of $G(s)$ has been considered as the $0$-th order change of variables, $c_0(s)$, in prior works \cite{gimeno2025explicit}. Of course, in the numerical example of Section 5, the initial representation of $G(s)$ is taken as the $N$-th order truncation. This approach provides the Taylor expansion of the Poincar\'e map, yielding the precise algebraic foundation required for the subsequent normal form construction.  

% II.B. Normal Forms on the Poincar\'e Map Near an Elliptic Fixed Point 
\subsection{Normal Forms for the Poincar\'e Map Near an Elliptic Fixed Point}\label{sec:Normalization}
With the high-order Taylor polynomial of the Poincar\'e map, $G(s)$, explicitly computed via jet transport, we now turn to the algebraic simplication of the local dynamics. The fundamental objective of discrete normal form theory is to construct a sequence of near-identity coordinate transformations that systematically strips away the non-integrable, cross-coupling terms up to a specified truncation order. In sufficiently smooth systems, the Poincar\'e-Dulac Theorem, \cite{arnold2012geometrical} proves that, locally, any discrete system is formally equivalent to a formal discrete dynamical system only containing resonant monomials \cite{gimeno2025explicit}. By applying this change of variables, the map is transformed to a simplified normal form of an integrable twist map, i.e., solely amplitude dependent, pushing the chaotic, angle-dependent perturbations into a high-order remainder tail. We construct this transformation in two phases: first, by diagonalizing the linear part of the map to establish a complex eigenbasis, and second, by recursively solving the homological equation (derived below) to normalize the higher-order terms. 

\subsubsection{Linear Normalization} 
The first step in the normalization process is to simplify the linear part of the map, given by the Jacobian matrix $G^1$ evaluated at the fixed point $x_0$. Because we are considering a fully elliptic fixed point of a system with a symplectic linear spectrum, the eigenvalues of $G^1$ lie on the complex unit circle in the complex plane, and, crucially, strictly occur in complex conjugate (reciprocal) pairs, $\lambda_i = e^{\imag\omega_i}$ and $\bar{\lambda}_i = \lambda_i^{-1} = e^{-\imag\omega_i}$ for $i = 1,\ldots,n$. These eigenvalues represent the fundamental rotational frequencies of the linear center manifold. 

% 1. Diagonalization 
We diagonalize the linear dynamics using the eigenvector decomposition of the Jacobian matrix. Let $V$ and $\Lambda$ be matrices over $\mathbb{C}$ representing the eigenvectors and eigenvalues of $G^1$, respectively, such that:
\begin{equation}
    \Lambda = V^{-1} G^1 V = \text{diag}\left(\lambda_1,\cdots,\lambda_n,\lambda_1^{-1},\cdots,\lambda_n^{-1}\right)
\end{equation}
Utilizing this decomposition, we define a linear change of variables by $c_1(s) = (\mathfrak{p} V)s$, which effectively transforms the real, physical local coordinates $s \in \mathbb{R}^{2n}$ into a set of complex conjugate coordinates $c_1(s) = (z,\bar{z})$, where $z = (z_1,\ldots,z_n) \in \mathbb{C}^n$. As established in the standard normal form framework for symplectic maps (e.g., Turchetti \cite{turchetti1990nekhoroshev} and Bazzani \cite{bazzani1987normal}), passing to these complex coordinates is algebraically essential. In this basis, the linear part of the map acts by strictly independent phase rotations, $z_i \mapsto e^{\imag\omega_i}z_i$ and $\bar{z}_i \mapsto e^{-\imag\omega_i}\bar{z}_i$. Consequently, any complex monomial forms by these variables is an eigenfunction of the linear operator, a property that perfectly diagonalizes the homological equations encountered in the subsequent higher-order normalization steps. 

Applying this transformation to the full polynomial map yields the first-order normal form, $F^{(1)}$, defined as:
\begin{equation}
    F^{(1)} = c_1^{-1} \circ G \circ c_1
\end{equation}
By construction, the linear part of $F^{(1)}$ is exactly the diagonal matrix $\Lambda$. The higher-order terms of $F^{(1)}$, i.e., $|j| = k \geq 2$, now represent the non-integrable nonlinearities expressed in the complex coordinate basis (sometimes referred to as the diagonal coordinates). Note that the scaling factor $\mathfrak{p}$, a parameter used to improve numerical performance, has no effect on the estimates derived in the following sections, as it is canceled through conjugations; see \cite{gimeno2025explicit} for details. 

\subsubsection{Finite-Order Normal Form Construction}
The objective of the finite-order normal form is to construct a sequence of near-identity coordinate transformations, starting with the linearly diagonalized map $F^{(1)}$, that iteratively eliminates non-resonant nonlinearities up to a desired truncation order $N$. After applying $N-1$ iterative normalization steps (plus the first normalizing step done by diagonalization), the resulting map takes the form 
\begin{equation}
    P(x_0+s) = F^{(N)}(s) + R^{(N)}(s) = \Lambda s + \sum_{k=2}^N F^{k,(N)}(s) + \mathcal{O}_{N+1}
\end{equation}
where $F^{k,(N)}(s)$ represents the normalized homogeneous polynomials of degree $k$, and $R^{(N)}(s)$ is the truncation remainder. 

To construct this normal form, we proceed inductively. Assume the map has been normalized up to degree $k-1$, yielding $F^{(k-1)}(s)$. To normalize the degree-$k$ terms, we introduce a near-identity coordinate transformation
\begin{equation}
    c_k(s) = s - \chi_k(s) 
\end{equation}
where $\chi_k(s) = \sum_{|j|=k} b_js^j$ is an unknown homogeneous polynomial of degree $k$ with vector coefficients $b_j \in \mathbb{C}^{2n}$. The inverse of this transformation, expanded to order $k$, is simply $c_k^{-1}(s) = s + \chi_k(s) + \mathcal{O}_{2k-1}$. Note that the choice of sign $+/-$ in the near-identity transformation is arbitrary and has been chosen as such in the present work to match \cite{gimeno2025explicit}.

We apply this transformation via conjugation to obtain the next step in our normal form
\begin{equation}
    F^{(k)}(s) = c_k^{-1} \circ F^{(k-1)} \circ c_k(s)
\end{equation}
To explicitly see how the transformation modifies the degree-$k$ terms, let $y = F^{(k-1)}(c_k(s))$. Since $\chi_k(s)$ contains only terms of degree $k$, evaluating the Taylor expansion of $F^{(k-1)}$ at $s - \chi_k(s)$ yields
\begin{equation}
    y = \Lambda (s - \chi_k(s)) + \sum_{m=2}^{k} F^{m,(k-1)}(s) + \mathcal{O}_{k+1}
\end{equation}
We then apply the inverse transformation, $F^{(k)}(s) = y + \chi_k(y)$. Because $\chi_k$ is a homogeneous polynomial of degree $k$, evaluating it at the linearly dominant argument $(\Lambda s + \mathcal{O}_2)$ simplifies to $\chi_k(\Lambda s) + \mathcal{O}_{k+1}$. Collecting all terms up to degree $k$, the composition becomes
\begin{equation}
    F^{(k)}(s) = \Lambda s + \sum_{m=2}^{(k-1)} F^{m,(k-1)} (s) + \left[ R^{k,(k-1)}(s) + \chi_k(\Lambda s) - \Lambda \chi_k(s) \right] + \mathcal{O}_{k+1}.
\end{equation}
This expansion demonstrates that the lower-order normalized terms $(m < k)$ remain undisturbed. At degree $k$, the new terms $F^{k,(k)}(s)$ are governed by the relation
\begin{equation}
    F^{k,(k)}(s) = R^{k,(k-1)}(s) + \chi_k(\Lambda s) - \Lambda \chi_k(s)
\end{equation}
Because the linear map acts diagonally, i.e., $\Lambda s = (\lambda_1 s_1, \ldots, \lambda_{2n} s_{2n})$, the complex monomials are eigenfunctions of $\Lambda$. Thus, $\chi_k(\Lambda s) = \sum b_j \lambda^j s^j$, where $\lambda^j = \lambda_1^{j_1} \cdots \lambda_{2n}^{j_{2n}}$. Now, looking at the $i$-th component of a specific multi-index $j$ (with $|j|=k$), we have:
\begin{equation}
    F_{j,i}^{k,(k)} = R_{j,i}^{k,(k-1)} + b_{j,i} \lambda^j - \lambda_i b_{j,i}
\end{equation}
Rearranging this provides the coefficient-level homological equation for discrete maps
\begin{equation}
    (\lambda_i - \lambda^j) b_{j,i} = R_{j,i}^{k,(k-1)} - F_{j,i}^{k,(k)}
\end{equation}

The goal of the normalization step is to eliminate the nonlinear coupling terms by forcing $F_{j,i}^{(k)} = 0$. When this is possible, the required transformation coefficient is given by the scalar division
\begin{equation}
    b_{j,i} = \frac{R_{j,i}^{k,(k-1)}}{\lambda_i - \lambda^j}
\end{equation}
However, this elimination is only possible if $\lambda_i - \lambda^j \neq 0$. A resonance thus occurs whenever the divisor vanishes. If $\lambda_i - \lambda^j = 0$, the coefficient $b_{j,i}$ is undefined, meaning we cannot eliminate the term. Instead, we must set $b_{j,i} = 0$ and retain the resonant term in the normal form, such that $F_{j,i}^{(k)} = F_{j,i}^{(k-1)}$ for the particular multi-index $j$ and coordinate $i$. 

Crucially, because the linear spectrum of an elliptic fixed point in a Hamiltonian system consists of reciprocal complex conjugate pairs ($\lambda_i = e^{\imag\omega_i}, \lambda_{i+n} = e^{-\imag\omega_i}$), trivial resonances are unavoidable at all odd degrees $(k = 3,5,\ldots)$. For example, terms containing combinations like $s_i(s_m s_{m+n}) = s_i |s_m|^2$ trivially satisfy $\lambda_i - (\lambda_i \lambda_m \lambda_m^{-1}) = 0$. These unavoidable resonant terms must be kept in the normal form, and they are precisely the terms responsible for the amplitude-dependent frequency shifts characterizing the integrable center manifold of the twist map. 

Finally, for the non-resonant terms where $\lambda_i - \lambda^j \neq 0$, simply having a non-zero divisor is insufficient to guarantee stability. To prevent the coefficients of $c_k$ from growing uncontrollably due to arbitrarily small divisors--a central challenge in deriving Nekhoroshev estimates--we require the fundamental frequencies to satisfy a strict Diophantine condition. Specifically, there must exist real constants $\gamma > 0$ and $\tau \geq n$ such that for all non-resonant combinations
\begin{equation}
    |\lambda_i - \lambda^j| \geq \frac{\gamma}{k^\tau}, \qquad |j| = k 
\end{equation}
By systematically applying the homological equation subject to this Diophantine condition, we construct the discrete normal form order-by-order up to degree $N$, formally preparing the system for the rigorous remainder bounds of Sections 3 and 4. 

\subsubsection{Canonicity of Transformations}
A natural question arises regarding the geometric properties of the near-identity transformations $c_k(s)$ constructed in the normalization procedure. Those accustomed to classical Hamiltonian perturbation theory may observe that the direct polynomial transformations employed here are not strictly canonical (symplectic). It is important then to clarify why this relaxation of exact symplecticity is both computationally advantageous and analytically permissible within the context of deriving effective stability bounds, as is our goal. 

In the classical Hamiltonian normal form constructions for continuous-time settings, normal forms are often constructed using Lie series or Lie transforms \cite{giorgilli2022notes}. In that approach, the coordinate transformation is defined as the time-1 flow of a generating Hamiltonian function. Because the transformation is a Hamiltonian flow, it is strictly canonical by construction, and the symplectic structure is perfectly preserved at all orders up to truncation. 

When extending these concepts to discrete symplectic maps, maintaining symplecticity becomes significantly more cumbersome. To rigorously preserve the symplectic structure, standard approaches--such as those developed by Bazzani, Guzzo, and others \cite{bazzani1987normal,guzzo2004direct}--often rely on discrete generating functions of mixed variables (e.g., $S(q,P) = qP + W(q,P)$). While this implicitly guarantees that the resulting transformation is canonical, it requires solving implicit equations to recover the explicit map $(q,p) \mapsto (Q,P)$, a process that is computationally prohibitive and algebraically dense when evaluating high-order expansions via jet transport. 

In contrast, the direct polynomial conjugation method we employ, $F^{(k)} = c_k^{-1} \circ F^{(k-1)} \circ c_k$, relies on explicit, finite-order polynomial inversions. Consequently, the transformation $c_k(s) = s - \chi_k(s)$ is symplectic up to the normalization order $k$; moreover, using polynomial mappings (and, in particular, their inversions) inherently introduces artificial violations of the symplectic condition at $\mathcal{O}_{k+1}$. 

To explicitly guarantee exact symplecticity without solving implicit equations, one could instead represent the transformations using Lie operators via Dragt-Finn factorization \cite{dragt1976lie,koseleff1994formal}. However, evaluating a Lie exponential yields an infinite series. Because our jet transport methodology inherently requires finite-degree polynomial truncation for computability (as does any computational method), truncating the Lie series would re-introduce symplectic violations at $\mathcal{O}_{N+1}$. In fact, this is the situation for Hamiltonian normal forms in the continuous-time setting: the transformations constructed to be canonical break canonicity at $\mathcal{O}_{N+1}$. Therefore, while explicit, finite-order polynomial normalization inherently precludes exact symplecticity, relaxing the strict canonical requirement provides an effective bridge between rigorous analytical stability bounds and practical numerical computation. 

Furthermore, this lack of exact canonicity does not invalidate the subsequent stability results. The fundamental architecture of Nekhoroshev estimates and effective stability theory relies strictly on analytic bounding techniques to control the exponential growth of the remainder norm $\|R^{(N)}\|$. While the underlying symplectic geometry is responsible for the existence of the paired eigenvalues and the invariant twist map structure, the quantitative bounds on the escape time of trajectories depend exclusively on the magnitude of the remainder pushing the trajectory off of those structures. Because our explicit polynomial transformations successfully push the non-integrable terms into a bounded $\mathcal{O}_{N+1}$ remainder, the analytic requirements for the Nekhoroshev estimates are fully satisfied, rendering the exact preservation of the symplectic 2-form unnecessary for the final stability bounds. 

In summary, while a classical Lie-based factorization would theoretically preserve symplecticity and eliminate artificial dissipation, computing the required nested Poisson brackets induces severe combinatorial growth in the majorant bounds (Section 3), ultimately leading to more conservative Nekhoroshev estimates (Section 4). Conversely, the discrete generating function approaches of Bazzani, Turchetti, and others successfully avoid this combinatorial explosion while preserving symplecticity; however, they require the inversion of implicit, mixed-variable equations. This implicit inversion is computationally prohibitive when operating on high-order maps generated via jet transport from continuous flows. By relaxing the strict requirement of exact symplecticity, our method of direct explicit polynomial composition occupies a methodological sweet spot. Because effective stability theorems bound all un-normalized dynamics, absorbing both Hamiltonian chaos and artificial non-canonical dissipation into the $\mathcal{O}_{N+1}$ remainder, direct polynomial composition is completely permissible. Consequently, our approach guarantees the computational feasibility absent in the Bazzani-Turchetti methods, while simultaneously yielding tighter analytical Cauchy estimates than Lie-based methods, all without compromising the rigorous bounds of the trajectory's escape time.

\subsubsection{Definition of the Remainder}
% Definition 
Let $c = c_1 \circ c_2 \circ \cdots \circ c_N$ denote the complete near-identity transformation up to a chosen truncation order $N$. Applying this full transformation to the Taylor expansion of the original Poincar\'e map centered at the fixed point $x_0$, i.e., $G(s) = P(x_0 + s)$, yields the transformed dynamics, which we split into a finite polynomial part (the normal form) and an infinite tail (the remainder)
\begin{equation}
    c^{-1} \circ G \circ c(s) = F^{(N)}(s) + R^{(N)}(s)
\end{equation}
Here, $F^{(N)}(s)$ is the completely normalized polynomial map of degree $N$ containing the linear rotations and the amplitude-dependent twist dynamics; in other words, $F^{(N)}$ is an integrable twist map. The function $R^{(N)}(s) = \mathcal{O}_{N+1}$ is the truncation remainder. 

% Analytical perspective 
From an analytical perspective, the remainder is the central object of study for the effective stability bounds derived in Section 3. Because $F^{(N)}(s)$ has invariant structures (i.e., tori), any drift of a trajectory away from the fixed point is driven exclusively by the remainder, $R^{(N)}(s)$. This remainder encapsulates all the physical, high-order non-integrable resonant couplings of the true system, as well as the artificial dissipative drift introduced by our non-canonical polynomial inversions. Bounding the norm of the remainder, $\|R^{(N)}\|$, over a local domain is the fundamental requirement for establishing the Nekhoroshev escape times. 

% Computational perspective 
From a computational perspective, evaluating an infinite series is practically impossible. Within our jet transport framework, the map and its transformations are computed as finite Taylor polynomials. Consequently, we cannot construct the infinite tail $R^{(N)}(s)$ explicitly term-by-term. Instead, we treat the remainder analytically using Cauchy estimates (Section 3). Because the original map $G$ and the transformations $c$ are analytic in a macroscopic complex neighborhood of the origin (a fixed point of $G$), the remainder is also an analytic function. Rather than computing the exact coefficients of $R^{(N)}(s)$, we utilize the bounds on the finite-order transformations and the domain of analyticity of $G$ to strictly bound the norm of the infinite tail over a polydisc. Thus, the computational task shifts from tracking an infinite number of coefficients to rigorously computing the majorant bounds of the finite operations that generated them. 

% II.C. Invariant Structures of the Truncated Normal Form
\subsection{Invariant Structures of the Truncated Normal Form}\label{sec:InvariantStructures}
Having constructed the Birkhoff normal form $F^{(N)}$ up to order $N$, we now examine the geometric and dynamic properties of this truncated system. Because the homological equations were solved under a non-resonant Diophantine condition, all angle-dependent coupling terms up to order $N$ have been eliminated. The only nonlinearities that survive the normalization procedure are the unavoidable ones, i.e., terms composed exclusively of combinations like $z_i(z_m \bar{z}_m)^{j_m}$. 

To interpret the dynamics of $F^{(N)}$ physically, it is natural to introduce a set of effective action variables, defined component-wise as
\begin{equation}
    I_i = z_i \bar{z}_i, \quad \text{for } i = 1,\ldots,n
\end{equation}
In classical Hamiltonian mechanics, these variables represent the unperturbed actions of the linear center manifold. Although our explicit polynomial transformations are not strictly canonical, these variables still perfectly capture the invariants of the truncated normalized dynamics. When expressed in terms of these actions, the truncated map $z' = F^{(N)}(z,\bar{z})$ factorizes into the integrable twist map
\begin{equation}
    z_i' = z_i \exp \left( \imag \Omega_i^{(N)}(I) \right) 
\end{equation}
Here, $\Omega_i^{(N)}(I)$ represents the nonlinear frequency of the $i$-th oscillator, which expands naturally as a polynomial in the action vector $I = (I_1,\ldots,I_n)$
\begin{equation}
    \Omega_i^{(N)}(I) = \omega_i + \sum_{1 \leq |m| \leq \lfloor \frac{N-1}{2} \rfloor} \beta_{i,m} I^m
\end{equation}
where $\omega_i$ are the base linear frequencies obtained from the initial Jacobian diagonalization, $m \in \mathbb{Z}_+^n$ is a multi-index, and $\beta_{i,m}$ are real coefficients capturing the nonlinear twist induced by the unavoidable resonances. 

Under this truncated map, computing the magnitude of the updated coordinates reveals that the action variables are strictly conserved
\begin{equation}
    I_i' = \frac{1}{2} z_i' \bar{z}_i' = \frac{1}{2} \left( z_i e^{\imag\Omega_i^{(N)}(I)} \right) \left( \bar{z}_i e^{-\imag\Omega_i^{(N)}(I)} \right) = \frac{1}{2} z_i \bar{z}_i = I_i
\end{equation}
Because the actions $I_i$ are constants of motion for the map $F^{(N)}$, the phase space of the truncated system is foliated by a continuous family of invariant tori, parameterized by the constant vector $I = I^{(0)}$ (or, equivalently, by the frequencies). Geometrically, an initial condition placed on a specific torus $I^{(0)}$ will remain on that exact torus for infinite time (perpetual stability) under the iteration of $F^{(N)}$, acting as a rigid quasi-periodic rotation at the constant frequency $\Omega_i^{(N)}(I^{(0)})$. These unperturbed invariant tori define the foundational topological skeleton of the local center manifold, and may be similarly used to construct local orbital elements for astrodynamics applications, as in \cite{peterson2023local,peterson2024gauss,peterson2024local}. 

While the action variables provide the most physically intuitive description of the integrable twist map, deriving rigorous effective stability estimates requires bounding the complex analytic extensions of the remainder $R^{(N)}$. For these analytical bounds, it is a bit cleaner to measure phase space domains using the geometric orbital radii $r = (r_1,\ldots,r_n) \in \mathbb{R}_+^n$, defined by their squares 
\begin{equation} 
    r_i^2 = z_i \bar{z}_i = 2 I_i
\end{equation}
Following the convention of Gimeno et al. \cite{gimeno2025explicit}, substituting the physical actions with these squared radii allows us to directly parameterize the invariant tori by their Euclidean amplitude in the complex eigenbasis (i.e., the diagonal coordinates). The twist frequencies can be equivalently expressed as $\Omega_i^{(N)}(r^2)$, and the tori remain defined by constant radii $r = r^{(0)}$. In the subsequent sections, we transition to using these orbital radii to formulate the main stability theorem, as they provide a direct metric for bounding the chaotic drift induced when the full non-integrable remainder is re-introduced to the system.

%%%%%%%%%%%%%%%%%%%%%%%%%%%%%%%%%%%%%%%%%%%%%%%%%%%%%%
% III. Remainder Estimates and Optimal Normalization %
%%%%%%%%%%%%%%%%%%%%%%%%%%%%%%%%%%%%%%%%%%%%%%%%%%%%%%
\section{Remainder Estimates and Optimal Normalization}

To rigorously prove that trajectories in the vicinity of an elliptic fixed point remain confined for long durations, we must bound the truncation remainder of the normal form. As established in Section 2, the sequence of near-identity transformations simplifies the nonlinear dynamics but unavoidably generates small divisors. These divisors cause the coefficients of the normal form to grow factorially, meaning the infinite series is generally divergent. To extract meaningful finite-time stability guarantees, we must therefore truncate the series at a finite order $N$. This introduces a fundamental competition: increasing $N$ pushes the un-normalized remainder to a higher-polynomial degree, making it geometrically small near the origin, but the factorial growth of the coefficients eventually overpowers this geometric decay. 

This section is dedicated strictly to quantifying this competition by deriving the analytical bounds for the normal form transformations and their effect on the remainder. We proceed by first establishing Cauchy estimates for the initial map, then bounding the coefficient growth during the normalization iterations, and finally quantifying the remainder after truncation. This sequence culminates in the derivation of the optimal normalization order $N_{\text{opt}}$ that minimizes the remainder for a given domain radius. The translation of these purely algebraic bounds into physical propositions relating to the action drift, stability time, and confinement region, is reserved for Section 4. 

To streamline the subsequent proofs and ensure consistency across the iterative estimates, we first establish the domain of analyticity and the norms utilized throughout this analysis. 

\subsubsection{Domain and Radius of Convergence} 
Consider a local, real-analytic, symplectic return map $G(s)$ near an elliptic fixed point at the origin. We assume that the initial Taylor series of $G(s)$ has a strictly positive radius of convergence. Specifically, there exists a real radius $\rho > 0$ such that the expansion converges absolutely on the complex polydisc centered at the origin, defined as 
\begin{equation}
    \mathcal{D}_\rho = \{ s \in \mathbb{C}^{2n} : |s_i| < \rho, \, i = 1,\ldots,2n \}.
\end{equation}
This radius of convergence $\rho$ establishes the maximum physical size of the analytical domain upon which our Taylor coefficients are well-defined and bounded. 

\subsubsection{Norms}
On the polydisc $\mathcal{D}_\rho$, we measure the magnitude of a scalar homogeneous polynomial of degree $k$ in $2n$ variables, $P_k(s) = \sum_{|j|=k} c_j s^j$, using the polynomial 1-norm:
\begin{equation}
    \|P_k\|_\rho = \sum_{|j|=k} |c_j| \rho^k
\end{equation}
We can isolate the sum of the absolute values of the coefficients by defining $C_k \coloneqq \sum_{|j|=k}|c_j|$, yielding $\|P_k\|_\rho = C_k \rho^k$. Note that the norm is sub-multiplicative, i.e., $\|P \cdot Q\|_\rho \leq \|P\|_\rho \|Q\|_\rho$. This property simplifies the bounding of the nested polynomial compositions that naturally arise when iterating normalizing transformations. 

For a vector-valued polynomial map $F(s) = (F_1(s),\ldots,F_{2n}(s))$ whose components are homogeneous polynomials of degree $k$, the norm is defined as the maximum over its components:
\begin{equation}
    \|F\|_\rho = \max_{i \in \{1,\ldots,2n\}} \|F_i\|_\rho 
\end{equation}

Finally, when evaluating linear operators (matrices) associated with these coordinate transformations, we employ the consistent induced matrix operator norm. For the supremum vector norm defined above, this corresponds precisely to the maximum row sum norm:
\begin{equation}
    \|A\| = \max_{i \in \{1,\ldots,2n\}} \sum_{m} |A_{i,m}|
\end{equation}

This framework guarantees that the norms of the initial Taylor expansion, the coordinate maps, their Jacobians, and the resulting nonlinear remainders remain compatible throughout the recursive estimates. 

% III.A. Bounds on the Normal Form Remainder 
\subsection{Bounds on the Normal Form Remainder}
With the analyticity domain and polynomial norms established, we are positioned to quantify the size of the coefficients for the initial Poincar\'e map.

\subsubsection{Cauchy Estimates for the Initial Map}
\begin{lemma}[Cauchy Estimates]
Assume that $G(s)$ is analytic on the complex polydisc $\mathcal{D}_\rho \coloneqq \{ |s|_\infty < \rho \}$. Let $C_k \coloneqq \sum_{|j|=k} |G_j^k|$. Then there exists computable constants $\tilde{C} > 0$ and $\tilde{\rho} > 0$ such that
\begin{equation}
    C_k \leq \frac{\tilde{C}}{\tilde{\rho}^k}.
\end{equation}
\end{lemma}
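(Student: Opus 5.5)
The plan is to apply the standard Cauchy inequality on a slightly smaller closed polydisc, and then absorb the polynomial count of multi-indices into a geometric factor by shrinking the radius once more. Fix any $0<\rho'<\rho$. Each component $G_i$ is analytic on a neighbourhood of the closed polydisc $\overline{\mathcal{D}_{\rho'}}$, so it is bounded there: $M \coloneqq \max_i \sup_{|s|_\infty \le \rho'} |G_i(s)| < \infty$. The multivariable Cauchy integral formula on the distinguished boundary (the torus $|s_m|=\rho'$, $m=1,\ldots,2n$) gives, for every multi-index $j$ with $|j|=k$,
\begin{equation*}
    |G^k_{j,i}| \le \frac{M}{\rho'^{\,k}},
\end{equation*}
since $G^k_{j,i} = (2\pi\imag)^{-2n}\oint G_i(s)\, s^{-j-\mathbf{1}}\,ds$ and $|s^{-j}| = \rho'^{-k}$ on that torus.

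Next I will sum over the multi-indices. The number of $j\in\mathbb{Z}_+^{2n}$ with $|j|=k$ is $\binom{k+2n-1}{2n-1}$, which is a polynomial in $k$ of degree $2n-1$. Since the paper takes the vector norm componentwise as a maximum, I interpret $C_k$ as $\max_i\sum_{|j|=k}|G^k_{j,i}|$; the other reading only multiplies the bound by $2n$. This gives
\begin{equation*}
    C_k \le \binom{k+2n-1}{2n-1}\frac{M}{\rho'^{\,k}}.
\end{equation*}
To remove the polynomial factor, I pick $\tilde\rho<\rho'$, for instance $\tilde\rho = \rho'/(1+\delta)$ with $\delta>0$, and write $\rho'^{-k} = \tilde\rho^{-k}(1+\delta)^{-k}$. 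Then $\binom{k+2n-1}{2n-1}(1+\delta)^{-k}$ is bounded uniformly in $k$. A clean explicit bound follows from the generating function identity $\sum_k \binom{k+2n-1}{2n-1}x^k=(1-x)^{-2n}$ with $x = (1+\delta)^{-1}$, which gives $\binom{k+2n-1}{2n-1}(1+\delta)^{-k}\le (1+\delta^{-1})^{2n}$. Setting $\tilde C = M(1+\delta^{-1})^{2n}$ then yields $C_k\le \tilde C/\tilde\rho^{\,k}$ for all $k$.

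The analytic content here is routine. The step I expect to need the most care is the word \emph{computable}. I would first note that $\tilde C$ and $\tilde\rho$ are explicit in terms of $M$, $\rho'$ and $\delta$. Then I would say how $M$ and $\rho'$ are obtained in practice from the jet-transport data. One option is to take a validated enclosure of $\sup|G|$ on the polydisc from the rigorous Taylor integrator, as in \cite{gimeno2023numerical}. Alternatively, and more pragmatically, one can fit $\tilde C$ and $\tilde\rho$ to the computed $C_k$, $k\le N$, by a log-linear regression of $\log C_k$ against $k$, in the spirit of a root-test estimate $\tilde\rho\approx\limsup C_k^{-1/k}$. I would present the Cauchy argument as the proof of existence. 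I would then flag that the numerical determination of $(\tilde C,\tilde\rho)$ uses the finite-order coefficients, and is rigorous only if paired with an interval enclosure of $G$ on $\overline{\mathcal{D}_{\rho'}}$.
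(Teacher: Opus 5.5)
Your proof is correct and follows the same route as the paper. Both arguments apply the Cauchy estimate to each coefficient, count the multi-indices of degree $k$, and absorb that polynomial count into a geometric factor by shrinking the radius; you use $\binom{k+2n-1}{2n-1}x^k\le(1-x)^{-2n}$ where the paper maximizes $x^{n-1}e^{-\epsilon x}$. Your version is also slightly more careful in two places: taking $M$ on the closed polydisc of radius $\rho'<\rho$ guarantees $M<\infty$ (the paper's supremum over the open $\mathcal{D}_\rho$ need not be finite), and you count multi-indices in all $2n$ variables of $\mathcal{D}_\rho\subset\mathbb{C}^{2n}$, whereas the paper writes $n$.
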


\begin{proof}
For analytic maps on $\mathcal{D}_\rho$, the standard Cauchy estimate gives:
\begin{equation}
    |G_j^k| \leq \frac{1}{\rho^k} \sup_{|s|_\infty < \rho} |G(s)|.
\end{equation}
Let $M$ be the constant
\begin{equation}
    M \coloneqq \sup_{|s|_\infty < \rho} |G(s)|, 
\end{equation}
so that the previous inequality is written more simply as
\begin{equation}
    |G_j^k| \leq \frac{M}{\rho^k}.
\end{equation}
Now, we bound the $C_k$ by
\begin{equation}
    C_k \leq \sum_{|j|=k} \frac{M}{\rho^k} = \frac{M}{\rho^k} \binom{n+k-1}{k},
\end{equation}
where the binomial coefficient comes from the number of $j$ such that $|j|=k$ in dimension $n$. 

To bound the binomial coefficient, consider
\begin{equation}
    \binom{n+k-1}{k} = \frac{(k+n-1)!}{k! (n-1)!} = \frac{1}{(n-1)!} \prod_{m=1}^{n-1} (k+m).
\end{equation}
For all $k \geq 1$ and all $1 \leq m \leq n-1$,
\begin{equation}
    k + m \leq k + (n-1),
\end{equation}
so that
\begin{equation}
    \prod_{m=1}^{n-1} (k+m) \leq (k+n-1)^{n-1}.
\end{equation}
Since $k \geq 1$, we have that $(n-1) \leq (n-1)k$ and thus (adding $k$ to both sides) $k+n-1 \leq nk$. Substituting this into the previous bound:
\begin{equation}
    \binom{n+k-1}{k} \leq \frac{n^{n-1}}{(n-1)!} k^{n-1} \eqqcolon b \cdot k^{n-1}. 
\end{equation}
Thus, substituting the bound on the binomial coefficient, we obtain the new bound for $C_k$ as
\begin{equation}
    C_k \leq \frac{M \cdot b}{\rho^k} k^{n-1}.
\end{equation}

Next, we need to remove the polynomial growth of the $k^{n-1}$, a common Nekhoroshev trick. Call $C = M \cdot b$, so that we have
\begin{equation}
    C_k \leq \frac{C}{\rho^k} k^{n-1}.
\end{equation}
We want a bound of the form
\begin{equation}
    C_k \leq \frac{\tilde{C}}{\rho^k},
\end{equation}
with explicit constants. The trick will be to replace $k^{n-1}$ by some exponential in $k$ and slightly shrinking the analyticity radius. Hence, we find a constant for ``exponential domination.'' Fix $\epsilon > 0$. Consider the function for $x > 0$:
\begin{equation}
    f(x) \coloneqq x^{n-1} e^{-\epsilon x}.
\end{equation}
This function $f$ is smooth and has a global maximum since $f \to 0$ as $x \to \infty$. Taking its derivative, we get
\begin{equation}
    f'(x) = x^{n-1}e^{-\epsilon x} \left( (n-1) - \epsilon x \right). 
\end{equation}
So, the critical point is: $x_* = (n-1)/\epsilon$. And the maximum value occurs at $x_*$ and is
\begin{equation}
    f(x_*) = \left( \frac{n-1}{\epsilon e} \right)^{n-1} e^{\epsilon x}. 
\end{equation}
The discrete version for integers $k \geq 1$ gives the bound
\begin{equation}
    k^{n-1} \leq \left( \frac{n-1}{\epsilon e} \right)^{n-1} e^{\epsilon k}. 
\end{equation}
Hence, we use this bound on the polynomial growth to obtain a uniform bound on the $C_k$ via:
\begin{equation}
    C_k \leq \frac{C}{R^k} \leq \frac{C}{R^k} \left( \frac{n-1}{\epsilon e} \right)^{n-1} e^{\epsilon k}.
\end{equation}
Denote by $\tilde{C}$ the new constant term
\begin{equation}
    \tilde{C} \coloneqq C \cdot (n-1/\epsilon e)^{n-1}. 
\end{equation} 
Convert the remaining exponential into a slightly smaller radius
\begin{equation}
    \frac{1}{\rho^k} e^{\epsilon k} = \frac{1}{(\rho e^{-\epsilon})^k}.
\end{equation}
Define $\tilde{\rho} \coloneqq \rho e^{-\epsilon} < \rho$, and note that $\tilde{\rho} \to \rho$ as $\epsilon \to 0$. Finally, we obtain the desired Cauchy estimate
\begin{equation}
    C_k \leq \frac{\tilde{C}}{\tilde{\rho}^k}. \label{eqn:CauchyEstimate}
\end{equation}
\end{proof}
With Equation \ref{eqn:CauchyEstimate}, the proof of Lemma 1 is complete. We have successfully established a rigorous, strictly geometric upper bound on the magnitude of the initial Taylor coefficients over the analyticity domain. This initial bound serves as the foundational baseline; in the following section, we will track exactly how these coefficients amplify as the normalization procedure begins.

\subsubsection{Coefficient Growth During Normalization}
Having bounded the original nonlinearities of the map, we must now quantify the accumulation of error and the growth of the coefficients as we iteratively put the system into normal form. At each step $k$, the map is updated by a near-identity coordinate transformation designed to eliminate the non-resonant terms of degree $k$. Mathematically, this normalization is done by successive conjugation
\begin{equation}
    G^{(k)} = c_k^{-1} \circ G^{(k-1)} \circ c_k, 
\end{equation}
where the coordinate transformation is defined as $c_k(s) = s - \chi_k(s)$. The generating function $\chi_k(s)$ is constructed by solving the homological equation:
\begin{equation}
    \chi_k(s) \coloneqq \sum_{|j|=k} \frac{R_{j,i}^{k,(k-1)}}{\lambda_i - \lambda^j} s^j, \qquad |\lambda_i - \lambda^j| \geq \frac{\gamma}{k^\tau}, \qquad \tau \geq 2, 
\end{equation}
where $\gamma$ and $\tau$ are positive constants defining the Diophantine condition on the frequencies. While this transformation successfully simplifies the $k$-th order dynamics, it inherently introduces two sources of analytical growth. First, the required division in the coefficients of $\chi_k(s)$ introduces small divisors, which directly amplify the magnitude of the generating function according to the Diophantine condition. Second, the explicit polynomial composition spawns a cascade of new, higher-order resonant and non-resonant terms (often referred to as the deformation or ``spillover''). 

To guarantee that the final remainder can be meaningfully bounded after $N$ steps, we must carefully control this growth. In this section, we derive iterative bounds that trace the worst-case accumulation of these coefficients through sequential conjugations. To manage the polynomial deformations algebraically, we employ a standard Cauchy technique: at each normalization step, we slightly shrink the radius of the analytical domain by a small amount $\delta$, ensuring the transformed map remains strictly bounded on the interior. 

First, we need to understand the coordinate transformations, finding appropriate bounds on the function, its derivative, and also the inverse map. These results are summarized and proved in the following lemma. 
\begin{lemma}[Coordinate Transformation]
    Let $\rho > 0$ be the radius of the polydisc $\mathcal{D}_\rho \subset \mathbb{C}^n$. Let $G^{k,(k-1)}(s)$ be the homogeneous polynomial of exact degree $k$ representing the un-normalized terms after $k-1$ steps. Assume the linear frequencies satisfy the Diophantine condition $|\lambda_i - \lambda^j| \geq \frac{\gamma}{k^\tau}$ for $\tau \geq 2$ and for all $|j|=k$. 

    Let $\delta = \frac{\rho}{6N}$ be the domain shrinking factor. Then, the homogeneous polynomial $\chi_k(s)$ of degree $k$ that solves the homological equation satisfies:
    \begin{enumerate}
        \item[1.] \textbf{Function bound:} $\|\chi_k\|_\rho \leq \frac{k^\tau}{\gamma} \|R^{k,(k-1)}\|_\rho$;
        \item[2.] \textbf{Derivative bound (Cauchy Estimate):} $\|\dop\chi_k\|_{\rho-\delta} \leq \frac{k^{\tau+1}}{\rho \cdot \gamma} \|R^{k,(k-1)}\|_\rho.$
    \end{enumerate}
    Moreover, if the un-normalized terms are small enough to satisfy the threshold condition
    \begin{equation}
        \frac{k^{\tau}}{\delta \gamma} \|R^{k,(k-1)}\|_\rho \leq \frac{1}{4},
    \end{equation}
    then, the coordinate transformation $c_k(s) = s -\chi_k(s)$ satisfies the following diffeomorphism guarantees:
    \begin{enumerate}
        \item[3.] \textbf{Forward Map:} $c_k$ is a biholomorphism onto its image, mapping $\mathcal{D}_{\rho - \delta} \to \mathcal{D}_\rho$; 
        \item[4.] \textbf{Inverse Map:} The inverse transformation $c_k^{-1}$ is rigorously defined as an analytic mapping $c_k^{-1}: \mathcal{D}_{\rho-2\delta} \to \mathcal{D}_{\rho-\delta}$.
    \end{enumerate}
\end{lemma}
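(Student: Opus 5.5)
The plan is to handle the four claims in order, using the coefficientwise homological equation of Section~\ref{sec:Normalization}, the polynomial $1$-norm with its componentwise maximum, and a standard Cauchy/contraction argument for the near-identity map.

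\textbf{Function bound.} Each coefficient of $\chi_k$ is either $b_{j,i}=R^{k,(k-1)}_{j,i}/(\lambda_i-\lambda^j)$ or zero in the resonant case. The Diophantine condition then gives $|b_{j,i}|\le \frac{k^\tau}{\gamma}|R^{k,(k-1)}_{j,i}|$ term by term. Summing over $|j|=k$ and multiplying by $\rho^k$ yields $\|\chi_{k,i}\|_\rho\le \frac{k^\tau}{\gamma}\|R^{k,(k-1)}_i\|_\rho$ for each component $i$. Taking the maximum over $i$ gives item 1.

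\textbf{Derivative bound.} Since $\chi_k$ is homogeneous of degree $k$, I will avoid the generic Cauchy inequality, which loses a factor $1/\delta$. Instead I will differentiate monomials directly. For $|j|=k$,
\[
\partial_{s_m} s^j = j_m s^{j-e_m},
\]
and summing over $m$ in the row-sum (operator) norm gives
\[
\|\dop\chi_{k,i}\|_{\rho'} \le \sum_j |b_{j,i}|\, k\,\rho'^{\,k-1}.
\]
At $\rho'=\rho-\delta\le\rho$ this is at most $\frac{k}{\rho}\|\chi_{k,i}\|_\rho$. Combined with item 1, this gives item 2.

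\textbf{Forward map.} Write $c_k=\mathrm{id}-\chi_k$. For $s\in\mathcal{D}_{\rho-\delta}$, the sup-norm perturbation satisfies
\[
|\chi_k(s)|_\infty\le\|\chi_k\|_{\rho}\le \frac{k^\tau}{\gamma}\|R^{k,(k-1)}\|_\rho\le \frac{\delta}{4}.
\]
This uses the threshold condition and the fact that the $1$-norm majorizes the sup. Hence $c_k(\mathcal{D}_{\rho-\delta})\subset\mathcal{D}_{\rho-3\delta/4}\subset\mathcal{D}_\rho$.

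For injectivity I will use the derivative bound on the convex set $\mathcal{D}_{\rho-\delta}$:
\[
\|\dop\chi_k\|\le \frac{k^{\tau+1}}{\rho\gamma}\|R\|_\rho=\frac{k\delta}{\rho}\cdot\frac{k^\tau}{\delta\gamma}\|R\|_\rho\le \frac{k}{4\cdot 6N}<\frac14,
\]
since $k\le N$. The mean value inequality then gives
\[
|c_k(s)-c_k(s')|\ge \tfrac34|s-s'|,
\]
so $c_k$ is injective. Its Jacobian $I-\dop\chi_k$ is invertible by a Neumann series, so $c_k$ is a biholomorphism onto its image.

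\textbf{Inverse map.} For $y\in\mathcal{D}_{\rho-2\delta}$, I will solve $s=y+\chi_k(s)$ by fixed-point iteration of $T_y(s)=y+\chi_k(s)$ on $\overline{\mathcal{D}}_{\rho-\delta}$. Self-mapping holds because $|T_y(s)|_\infty<\rho-2\delta+\delta/4<\rho-\delta$. Contraction holds with constant $1/4$ by the derivative estimate above. The Banach fixed-point theorem then gives a unique $s=c_k^{-1}(y)\in\mathcal{D}_{\rho-\delta}$. Analyticity of $c_k^{-1}$ follows because it is a uniform limit of holomorphic iterates, or alternatively from the holomorphic inverse function theorem together with injectivity.

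\textbf{Main obstacle.} The hardest part is the bookkeeping of norms. Item 2 as stated has $1/\rho$ rather than $1/\delta$, so the generic Cauchy estimate is insufficient and the homogeneity argument is essential. I also have to make sure that the $1$-norm on a polydisc genuinely dominates the sup norm and the operator norm of the Jacobian, which holds via the triangle inequality on monomials. Finally, the threshold $1/4$ has to be checked to yield both the contraction and the margin $\delta/4<\delta$.
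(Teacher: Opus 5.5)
Your proposal is correct and follows essentially the same route as the paper: a termwise Diophantine bound for item 1, direct differentiation of the degree-$k$ monomials to get the $k/\rho$ factor in item 2, and a triangle-inequality, mean-value and Banach fixed-point argument with $T_y(s)=y+\chi_k(s)$ for items 3--4. The differences are refinements rather than a new route: you read off $\|\chi_k\|_\rho\le\delta/4$ and the Lipschitz constant $k/(24N)<1/4$ directly from the threshold, where the paper settles for $\delta/2$ and $1/2$, and you are more careful in running the contraction on the closed polydisc and in justifying that $c_k^{-1}$ is holomorphic.
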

\begin{proof}
    To prove (1), we simply apply the polynomial norm and the Diophantine condition:
    \begin{equation}
        \|\chi_k\|_\rho = \sum_{|j|=k} \left| \frac{R_j^{k,(k-1)}}{\lambda_i - \lambda^j} \right| \rho^k \leq \frac{k^\tau}{\gamma} \sum_{|j|=k} |R_j^{k,(k-1)}| \rho^k = \frac{k^\tau}{\gamma} \|R^{k,(k-1)}\|_\rho.  
    \end{equation}
    
    To prove (2), observe that the $i$-th component of $\chi_k$ is 
    \begin{equation}
        \chi_{k,i}(s) = \sum_{|j|=k} b_{j,i} s_1^{j_1} \cdots s_n^{j_n}.
    \end{equation}
    Taking the formal partial derivative with respect to $s_\ell$ brings down the exponent $j_\ell$:
    \begin{equation}
        \frac{\partial \chi_{k,i}}{\partial s_\ell} = \sum_{|j|=k, \, j_\ell \geq 1} b_{j,i} j_\ell s_1^{j_1} \cdots s_\ell^{j_\ell-1} \cdots s_n^{j_n}. 
    \end{equation}
    Evaluating its norm at radius $r$ yields 
    \begin{equation}
        \left\| \frac{\partial \chi_{k,i}}{\partial s_\ell} \right\|_r = \sum_{|j|=k, \, j_\ell \geq 1} |b_{j,i}| \cdot j_\ell \cdot r^{k-1}
    \end{equation}
    Summing over the row, we then obtain 
    \begin{equation}
        \sum_{\ell = 1}^n \left\| \frac{\partial \chi_{k,i}}{\partial s_\ell} \right\|_r = \sum_{\ell=1}^n \left( \sum_{|j|=k, \, j_\ell \geq 1} |b_{j,i}| \cdot j_\ell \cdot r^{k-1} \right)
    \end{equation}
    Since the sum is finite, we can simply factor
    \begin{equation}
        \sum_{\ell = 1}^n \left\| \frac{\partial \chi_{k,i}}{\partial s_\ell} \right\|_r = \left( \sum_{\ell=1}^n j_\ell \right) \left( \sum_{|j|=k} |b_{j,i}| \cdot r^{k-1} \right)
    \end{equation}
    Since $|j|=k$, we have
    \begin{equation}
        \sum_{\ell = 1}^n \left\| \frac{\partial \chi_{k,i}}{\partial s_\ell} \right\|_r = k \sum_{|j|=k} |b_{j,i}| r^{k-1} = \frac{k}{r} \sum_{|j|=k} |b_{j,i}| r^k = \frac{k}{r} \|\chi_k\|_r
    \end{equation}
    Taking the maximum over all components $i$, we obtain 
    \begin{equation}
        \|\dop\chi_k\|_r = \frac{k}{r} \|\chi_k\|_r 
    \end{equation}
    Since we need to evaluate this on a shrunken domain, let $r = \rho - \delta$. Then, we have
    \begin{equation}
        \|\dop\chi_k\|_{\rho-\delta} = \frac{k}{\rho-\delta} \|\chi_k\|_{\rho-\delta}
    \end{equation}
    Since all coefficients in the absolute norm are positive, shrinking the radius strictly decreases the norm, whence we have
    \begin{equation}
        \|\chi_k\|_{\rho-\delta} = \left( \frac{\rho-\delta}{\rho} \right)^k \|\chi_k\|_\rho 
    \end{equation}
    Now, substituting this into the previous expression, 
    \begin{equation}
        \|\dop\chi_k\|_{\rho-\delta} = \frac{k}{\rho-\delta} \left( \frac{\rho-\delta}{\rho} \right)^k \|\chi_k\|_\rho = \frac{k}{\rho} \left( \frac{\rho-\delta}{\rho} \right)^{k-1} \|\chi_k\|_\rho
    \end{equation}
    Since $(\rho-\delta)/\rho < 1$, $[(\rho-\delta)/\rho]^{k-1} < 1$, and we bound
    \begin{equation}
        \|\dop\chi_k\|_{\rho-\delta} \leq \frac{k}{\rho} \|\chi_k\|_\rho
    \end{equation}
    Substituting in the bound proved in (1), we obtain the result
    \begin{equation}
        \|\dop\chi_k\|_{\rho-\delta} \leq \frac{k^{\tau+1}}{\rho \cdot \gamma} \|R^{k,(k-1)}\|_\rho
    \end{equation}
    as desired. 

    We will prove (3) and (4) together. We first verify that $c_k(s)$ does not map points too far away, ensuring it stays within the original domain $\mathcal{D}_\rho$. Let $s \in \mathcal{D}_{\rho-\delta}$. By the triangle inequality, 
    \begin{equation}
        \|c_k(s)\| = \|s - \chi_k(s)\| \leq \|s\| + \|\chi_k(s)\|. 
    \end{equation}
    Since $s \in \mathcal{D}_{\rho-\delta}$, $\|s\| \leq \rho - \delta$. Since $\mathcal{D}_{\rho - \delta} \subset \mathcal{D}_\rho$, we can apply (2) and the added hypothesis to prove a useful function bound on $\|\chi_k\|_\rho$. By (2) and the added hypothesis, we have
    \begin{equation}
        \|\dop\chi_k\|_{\rho-\delta} \leq \frac{1}{\delta} \|\chi_k\|_\rho \leq \frac{1}{2} \implies \|\chi_k\|_\rho \leq \frac{\delta}{2},
    \end{equation}
    where we have set $\delta = \frac{\rho}{6N} < \frac{\rho}{k}$ (as we will have it throughout this section). Then, we have
    \begin{equation}
        \|c_k(s)\| \leq \|s\| + \|\chi_k(s)\| \leq (\rho - \delta) + \frac{\delta}{2} < \rho
    \end{equation}
    Thus, we have showed that $c_k(\mathcal{D}_{\rho-\delta}) \subset \mathcal{D}_\rho$. 

    Next, we will prove that $c_k$ is injective on the shrunken domain. To prove that $c_k$ is one-to-one on $\mathcal{D}_{\rho-\delta}$, we use the Jacobian bound. Let $s_1,s_2 \in \mathcal{D}_{\rho-\delta}$. Because the polydisc is convex, the straight line segment connecting $s_1$ and $s_2$ lies entirely in $\mathcal{D}_{\rho-\delta}$. So, we can apply the Mean Value Inequality for complex vector-valued functions:
    \begin{equation}
        \|\chi_k(s_1) - \chi_k(s_2)\| \leq \left( \sup_{s \in \mathcal{D}_{\rho-\delta}} \|\dop\chi_k(s)\| \right) \|s_1 - s_2\|
    \end{equation}
    By hypothesis, $\|\dop\chi_k(s)\| \leq 1/2$, hence
    \begin{equation}
        \|\chi_k(s_1) - \chi_k(s_2)\| \leq \frac{1}{2} \|s_1 - s_2\|
    \end{equation}
    Then, consider the distance between two transformed points:
    \begin{align}
        \|c_k(s_1) - c_k(s_2)\| &= \| (s_1-\chi_k(s_1)) - (s_2-\chi_k(s_2)) \| \\
        &\geq \|s_1 - s_2\| - \|\chi_k(s_1) - \chi_k(s_2)\| \\
        &\geq \|s_1 - s_2\| - \frac{1}{2} \|s_1 - s_2\| \\ 
        &= \frac{1}{2} \|s_1 - s_2\|
    \end{align}
    If $c_k(s_1) = c_k(s_2)$, then the left-hand side is zero, forcing $\|s_1 - s_2\| = 0$, implying $s_1 = s_2$. Therefore, $c_k$ is injective on $\mathcal{D}_{\rho-\delta}$. 

    To prove the existence of the inverse map $c_k^{-1}$, we will apply a contraction mapping argument. We need to guarantee that $c_k^{-1}$ is well-defined on a well-specified domain so that we can compose it with $F^{(k-1)}$. We will prove that the image of $c_k$ contains the polydisc $\mathcal{D}_{\rho-2\delta}$. Along these lines, let $w \in \mathcal{D}_{\rho-2\delta}$. We will show that there exists a unique $s \in \mathcal{D}_{\rho-\delta}$ such that $c_k(s) = w$. To do this, define the fixed point operator
    \begin{equation}
        s = w + \chi_k(s) \eqqcolon T_w(s)
    \end{equation}
    Apply the Banach Fixed Point Theorem to the operator $T_w(s)$ acting on the complete metric space $\mathcal{D}_{\rho-\delta}$:
    \begin{enumerate}
        \item[1.] $T_w$ maps $\mathcal{D}_{\rho-\delta}$ into itself. Let $s \in \mathcal{D}_{\rho-\delta}$, then
        \begin{equation}
            \|T_w(s)\| \leq \|w\| + \|\chi_k(s)\| \leq (\rho - 2\delta) + \frac{\delta}{2} < \rho - \delta.
        \end{equation}
        Hence, $T_w(\mathcal{D}_{\rho-\delta}) \subset \mathcal{D}_{\rho-\delta}$. 
        \item[2.] $T_w(s)$ is a strict contraction. For any $s_1,s_2 \in \mathcal{D}_{\rho-\delta}$, we have
        \begin{equation}
            \|T_w(s_1) - T_w(s_2)\| = \|\chi_k(s_1) - \chi_k(s_2)\| \leq \frac{1}{2} \|s_1 - s_2\|
        \end{equation}
    \end{enumerate}
    Because $T_w$ is a contraction mapping from $\mathcal{D}_{\rho-\delta}$ to itself, there exists a unique fixed point $s \in \mathcal{D}_{\rho-\delta}$ for every $w \in \mathcal{D}_{\rho-2\delta}$. Therefore, the inverse map $c_k^{-1}$ is rigorously well-defined as a biholomorphism from the smaller domain to the slightly larger domain:
    \begin{equation}
        c_k^{-1}: \mathcal{D}_{\rho-2\delta} \to \mathcal{D}_{\rho-\delta}
    \end{equation}
    completing the proof. 
\end{proof}

With the proof of Lemma 2 complete, we have rigorously quantified the geometric and analytic properties of the $k$-th order coordinate transformation. By sacrificing a small domain margin $\delta$, we guarantee that both the forward near-identity map $c_k$ and its inverse $c_k^{-1}$ are well-defined, strictly bounded, and analytically well-behaved. 

The next logical step in the normalization procedure is to actively apply this transformation to the mapping itself. In Lemma 3, we execute the full conjugation $G^{(k)} = c_k^{-1} \circ G^{(k-1)} \circ c_k$. This composition introduces spillover of high-order remainder terms. The following lemma tracks the accumulation of these polynomial deformations, yielding a strict upper bound on the coefficients of the updated map over the newly reduced domain. 

\begin{lemma}[Composition and Remainder Bound]
Assume the hypotheses of Lemma 2. Furthermore, assume the intermediate map $G^{(k-1)}$ satisfies the domain bound $\|G^{>2,(k-1)}\|_{\rho-2\delta} \leq \delta/2$. Then, the newly composed map after the $k$-th normalization step, $G^{(k)} = c_k^{-1} \circ G^{(k-1)} \circ c_k$ satisfies:
\begin{enumerate}
    \item[1.] \textbf{Domain of Analyticity (Topological Validity):} The full map $G^{(k)}$ is a well-defined analytic function on the shrunken polydisc $\mathcal{D}_{\rho-3\delta}$, and it maps this domain strictly into $\mathcal{D}_{\rho}$. 
    \item[2.] \textbf{Recursive Remainder Bound:} There exists a universal constant $C > 0$ such that the norm of the new remainder is bounded by:
    \begin{equation}
            \|R^{(k)}\|_{\rho_k} \leq \|R^{>k,(k-1)}\|_{\rho_{k-1}} + \frac{k^\tau}{\gamma} \|R^{k,(k-1)}\|_{\rho_{k-1}} + \|R^{k,(k-1)}\|_{\rho_{k-1}}^2 \left[ \frac{k^{\tau+1}}{\rho_{k-1} \gamma} + \frac{k^{2\tau}}{\gamma^2} \left( \frac{1}{\delta} + \frac{k}{\rho_{k-1}} \right) \right]
    \end{equation}
\end{enumerate}
\end{lemma}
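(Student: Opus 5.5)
Part (1) follows by chaining the domain statements of Lemma~2 with the new hypothesis on $G^{(k-1)}$, and part (2) by expanding the conjugation $c_k^{-1}\circ G^{(k-1)}\circ c_k$ to second order in $\chi_k$ and bounding each piece with the estimates of Lemma~2.

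\textbf{Part (1).} Take $s\in\mathcal{D}_{\rho-3\delta}$.
\begin{itemize}
\item[] \emph{Forward map.} By Lemma~2(3), and using $\|\chi_k\|_\rho\le\delta/2$ from the threshold condition, $c_k(s)$ moves $s$ by at most $\delta/2$. Hence $c_k(s)\in\mathcal{D}_{\rho-5\delta/2}\subset\mathcal{D}_{\rho-2\delta}$.
\item[] \emph{Middle map.} Split $G^{(k-1)}=\Lambda+G^{\ge2,(k-1)}$. The linear part $\Lambda$ is diagonal with unimodular entries, so it preserves each polydisc. The nonlinear part is bounded by $\delta/2$ on $\mathcal{D}_{\rho-2\delta}$ by the new hypothesis (I read $G^{>2}$ as the nonlinear part). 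Therefore $G^{(k-1)}(c_k(s))$ lands in $\mathcal{D}_{\rho-2\delta}$ after accounting for the two $\delta/2$ margins.
\item[] \emph{Inverse map.} This is where Lemma~2(4) applies: $c_k^{-1}$ is analytic on $\mathcal{D}_{\rho-2\delta}$ with values in $\mathcal{D}_{\rho-\delta}\subset\mathcal{D}_\rho$.
\end{itemize}
A composition of analytic maps is analytic, which gives topological validity on $\mathcal{D}_{\rho-3\delta}$. Throughout I set $\rho_{k-1}=\rho$ and $\rho_k=\rho-3\delta$.

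\textbf{Part (2), expansion.} Write $c_k^{-1}(w)=w+\chi_k(w)+E(w)$, where $E(w)=\chi_k(c_k^{-1}(w))-\chi_k(w)$. The fixed-point relation $c_k^{-1}=w+\chi_k\circ c_k^{-1}$ from the contraction argument gives this form directly. By the Mean Value Inequality and Lemma~2(2),
\[
\|E\|\le \|\dop\chi_k\|\,\|\chi_k\|\lesssim \frac{k^{\tau+1}}{\rho\gamma}\cdot\frac{k^\tau}{\gamma}\|R^{k}\|^2 .
\]
Expanding $G^{(k-1)}(s-\chi_k(s))$ gives
\[
G^{(k-1)}(s)-\Lambda\chi_k(s)-\big[G^{\ge2,(k-1)}(s-\chi_k)-G^{\ge2,(k-1)}(s)\big].
\]
Composing with $c_k^{-1}$ then yields $G^{(k-1)}-\Lambda\chi_k+\chi_k\circ G^{(k-1)}+(\text{cross terms})$. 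The degree-$k$ part $R^{k}+\chi_k(\Lambda s)-\Lambda\chi_k$ is exactly the normalized term, so it leaves the remainder. The terms that stay in $R^{(k)}$ are:
\begin{itemize}
\item[] the old tail $R^{>k,(k-1)}$, which gives the first summand;
\item[] $\chi_k(G^{(k-1)}(s))-\chi_k(\Lambda s)$, the nonlinear part of $\chi_k\circ G$. Bounded crudely by $\|\chi_k\|$ via Lemma~2(1), it gives the $\frac{k^\tau}{\gamma}\|R^k\|$ summand; this term is coarse, first order in $R^k$, as in the statement;
\item[] the quadratic cross terms, which give the bracket.
\end{itemize}

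\textbf{Part (2), quadratic bracket.} This is the main obstacle. The difference $G^{\ge2}(s-\chi_k)-G^{\ge2}(s)$ must be bounded by a derivative of the nonlinear part of $G^{(k-1)}$ times $\|\chi_k\|$. I would control that derivative by the Cauchy estimate with domain loss $\delta$, which produces the factor $\frac{1}{\delta}\cdot\frac{k^{2\tau}}{\gamma^2}\|R^k\|^2$. For the $\frac{k}{\rho}$ term, I would use the homogeneous-derivative identity $\|\dop\chi_k\|_r=\frac{k}{r}\|\chi_k\|_r$ from the proof of Lemma~2 applied to the composition $\chi_k\circ(\cdot)$. Together with $E$ above, this gives the $\frac{k^{\tau+1}}{\rho\gamma}\|R^k\|^2$ term.

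The delicate point is identifying the nonlinear part of $G^{(k-1)}$ with $R^{k,(k-1)}$ at leading order. Lower-degree normal-form terms also appear there, but after the earlier steps they are resonant and of bounded size; I would absorb them into the universal constant $C$.

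Finally, every intermediate quantity is kept within the smallness $\le\delta/2$ guaranteed by Lemma~2 and the hypothesis. This keeps all points inside the domains used for the Cauchy estimates, and summing the contributions gives the stated recursion.
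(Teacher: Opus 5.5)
Your Part (1) is correct. It is in fact slightly tighter than the paper's: placing $c_k(s)$ in $\mathcal{D}_{\rho-5\delta/2}$ makes $G^{(k-1)}(c_k(s))$ land in $\mathcal{D}_{\rho-2\delta}$, exactly where Lemma 2(4) defines $c_k^{-1}$. The paper instead lands in $\mathcal{D}_{\rho-3\delta/2}$.

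The gap is in Part (2), in the bracket $G^{\ge2,(k-1)}(s-\chi_k(s))-G^{\ge2,(k-1)}(s)$, which is \emph{not} second order in $\|R^{k,(k-1)}\|$. Consider its component from the already-normalized terms, $F^{(k-1)}(s-\chi_k)-F^{(k-1)}(s)\approx-\dop F^{(k-1)}\chi_k$. The same applies to the tail component $R^{>k,(k-1)}$, which lands in the bracket only because you expanded $G^{(k-1)}\circ c_k$ around $s$. These components are of size $\|\dop F^{(k-1)}\|\,\|\chi_k\|$: first order in $\|R^{k,(k-1)}\|$, with a coefficient set by the lower-degree terms rather than by $R^{k,(k-1)}$.

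``Absorbing'' this into the universal constant $C$ is not available. $C$ never actually appears in the inequality to be proved, and every coefficient there is explicit. The only first-order slot is $\frac{k^\tau}{\gamma}\|R^{k,(k-1)}\|$, and you have already spent it in full on $\chi_k\circ G^{(k-1)}-\chi_k\circ\Lambda$. So your bookkeeping yields an extra term of the form $\delta^{-1}\|G^{\ge2,(k-1)}\|\,\|\chi_k\|$, or a linear coefficient larger than $k^\tau/\gamma$, rather than the stated recursion. Even your proposed identification of the nonlinear part with $R^{k,(k-1)}$ would only give $\delta^{-1}\|R^{k,(k-1)}\|\,\|\chi_k\|\le\frac{k^\tau}{\delta\gamma}\|R^{k,(k-1)}\|^2$. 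That is not the $\frac{k^{2\tau}}{\delta\gamma^2}$ term you attribute to this bracket.

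The missing idea is to use the hypothesis $\|G^{>2,(k-1)}\|_{\rho-2\delta}\le\delta/2$ quantitatively in Part (2), not only in the domain chase. Through a Cauchy estimate with loss $\delta$, it bounds the derivative of the nonlinear part by $\frac12$. This is what turns ``resonant and of bounded size'' into a universal number. The paper then splits the linear budget into two halves:
\begin{itemize}
\item the normal-form spillover $F^{>2,(k-1)}(s-\chi_k)-F^{>2,(k-1)}(s)$ costs at most $\frac12\|\chi_k\|$ (term IV);
\item the inverse-map displacement at $y=G^{(k-1)}(c_k(s))$ is compared with its value at $\Lambda s$. Here $\|y-\Lambda s\|\le\|\chi_k\|+\delta/2$ and the derivative is at most $\delta^{-1}\|\chi_k\|$, so this costs $\frac12\|\chi_k\|+\delta^{-1}\|\chi_k\|^2$ (term III$_a$). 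This is where $\frac{k^{2\tau}}{\delta\gamma^2}$ comes from.
\end{itemize}
Correspondingly, your $\chi_k\circ G^{(k-1)}-\chi_k\circ\Lambda$ piece must be bounded by $\frac12\|\chi_k\|$, not ``crudely by $\|\chi_k\|$''. Use the mean value inequality, Lemma 2(2) and the displacement $\delta/2$.

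Keep the tail as $R^{>k,(k-1)}\circ c_k$ and bound it by its norm on $\mathcal{D}_{\rho-2\delta}$ (term I). The $\frac{k^{\tau+1}}{\rho\gamma}\|R^{k,(k-1)}\|^2$ term comes from the deformation $R^{k,(k-1)}(s-\chi_k)-R^{k,(k-1)}(s)$, via the homogeneous bound $\|\dop R^{k,(k-1)}\|\le\frac{k}{\rho}\|R^{k,(k-1)}\|_\rho$ (term II). Your $E$ produces the $\frac{k^{2\tau+1}}{\rho\gamma^2}$ term (the paper's III$_b$), not the $\frac{k^{\tau+1}}{\rho\gamma}$ one.
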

\begin{proof}
To prove the first part, we must trace the map through the nested domains of:
\begin{equation}
    G^{(k)} = c_k^{-1}(G^{(k-1)}(c_k(s)))
\end{equation}
First, the forward transformation maps $s \mapsto c_k(s)$. For $s \in \mathcal{D}_{\rho - 3\delta}$, we have
\begin{equation}
    \|c_k(s)\|_{\rho - 3\delta} \leq \|s\|_{\rho-3\delta} + \|\chi_k(s)\|_{\rho-3\delta} \leq (\rho - 3\delta) + \delta = \rho - 2\delta
\end{equation}
Next, the old map application maps $x \mapsto G^{(k-1)}(x)$. For $x \in \mathcal{D}_{\rho-2\delta}$, we have
\begin{align}
    \|G^{(k-1)}(x)\|_{\rho-2\delta} &\leq \|\Lambda x\|_{\rho-2\delta} + \|G^{>2,(k-1)}(x)\|_{\rho-2\delta} \\
    &\leq \|x\|_{\rho-2\delta} + \|G^{>2,(k-1)}\|_{\rho-2\delta} \\
    &\leq (\rho-2\delta) + \frac{\delta}{2} \\
    &\leq \rho - \frac{3\delta}{2}
\end{align}
where the bound $\|G^{>2,(k-1)}\|_{\rho-2\delta} \leq \delta/2$ comes from the threshold condition. 
Then, by part 4 of Lemma 2, the inverse map is an analytical mapping with $c_k^{-1}(\mathcal{D}_{\rho-3\delta/2}) \subset \mathcal{D}_{\rho-\delta/2} \subset \mathcal{D}_\rho$, thus proving part 1. 

To prove the bound on the new remainder, we must carefully expand the conjugated map $G^{(k)} = c_k^{-1} \circ G^{(k-1)} \circ c_k$. We decompose the intermediate map $G^{(k-1)}$ into its linear part, the already-normalized (unavoidable resonant) terms up to degree $k-1$, and the remainder
\begin{equation}
    G^{(k-1)}(s) = \Lambda s + F^{(k-1)}(s) + R^{k,(k-1)}(s) + R^{>k,(k-1)}(s)
\end{equation}
We apply the forward transformation $c_k(s) = s - \chi_k(s)$ and the inverse transformation $c_k^{-1}(y) = y + \chi_k(y) + \eta_k(y)$. Substituting these into the composition yields
\begin{equation}
    G^{(k)}(s) = c_k^{-1}\left( \Lambda (s-\chi_k(s)) + F^{(k-1)}(s-\chi_k(s)) + R^{(k-1)}(s-\chi_k(s)) \right) 
\end{equation}
Applying the outer inverse map and gathering the terms by their polynomial degree, we can isolate the terms strictly of degree $k$. Note that because the lowest-degree term in $F^{(k-1)}$ is 3, its Taylor expansion $F^{(k-1)}(s-\chi_k(s)) = F^{(k-1)}(s) - \dop F^{(k-1)}(s)\chi_k(s) + \cdots$ only contributes terms of degree $k+1$ and higher. Hence, isolating the terms up to degree $k$, we obtain
\begin{equation}
    G^{(k)}(s) = \Lambda s + F^{(k-1)}(s) + \left[ \chi_k(\Lambda s) - \Lambda \chi_k(s) + R^{k,(k-1)}(s) \right] + R^{(k)}(s)
\end{equation}
By construction of the normal form, the generating function $\chi_k(s)$ is chosen specifically to solve the homological equation 
\begin{equation}
    \chi_k(\Lambda s) - \Lambda \chi_k(s) + R^{k,(k-1)}(s) = f_k(s)
\end{equation}
where $f_k(s)$ contains the unavoidable resonant terms of degree $k$. (Note that $f_k \equiv 0$ for all even $k$.) 

We then absorb $f_k(s)$ into the normalized part of the map, defining $F^{(k)}(s) = F^{(k-1)}(s) + f_k(s)$. The completely updated map is thus
\begin{equation}
    G^{(k)}(s) = \Lambda s  + F^{(k)}(s) + R^{(k)}(s)
\end{equation}
The new remainder $R^{(k)}(s)$ is composed strictly of the higher-order ``spillover'' terms. Grouping these terms, the updated remainder has the form
\begin{align}
    R^{(k)}(s) &= R^{>k,(k-1)}(s-\chi_k(s)) + \left[ R^{k,(k-1)}(s-\chi_k(s)) - R^{k,(k-1)}(s) \right] + \left[ \eta_k(y) - \chi_k(\Lambda s) \right] \nonumber \\
    &+ \left[ F^{>2,(k-1)}(s-\chi_k(s)) - F^{>2,(k-1)}(s) \right]  \\ 
    &\eqqcolon \text{I} + \text{II} + \text{III} + \text{IV}
\end{align}
Note that I is the shifted old remainder, II is the deformation of the $k$-th term, and III is the inverse map spillover, and IV is the normal form spillover. By the triangle inequality, 
\begin{equation}
    \|R^{(k)}\|_{\rho-3\delta} \leq \|\text{I}\|_{\rho-3\delta} + \|\text{II}\|_{\rho-3\delta} + \|\text{III}\|_{\rho-3\delta} + \|\text{IV}\|_{\rho-3\delta} 
\end{equation}
For the shifted old remainder, recall that $c_k(\mathcal{D}_{\rho-3\delta}) \subset \mathcal{D}_{\rho-2\delta}$. Hence, 
\begin{equation}
    \|R^{>k,(k-1)} \circ c_k\|_{\rho-3\delta} \leq \|R^{>k,(k-1)}\|_{\rho-2\delta} \leq \|R^{>k,(k-1)}\|_\rho
\end{equation}
For the deformation of the $k$-th term, we apply the Mean Value Theorem along a line segment connecting $s$ and $s-\chi_k(s)$. By I, both endpoints and the whole segment lie inside $\mathcal{D}_{\rho-2\delta}$. Hence, we have
\begin{equation}
    \|R^{k,(k-1)}(s-\chi_k(s)) - R^{k,(k-1)}(s)\|_{\rho-3\delta} \leq \left( \sup_{s \in \mathcal{D}_{\rho-2\delta}} \|\dop R^{k,(k-1)}\| \right) \|\chi_k(s)\|_{\rho-3\delta} 
\end{equation}
Applying the Cauchy estimate to the derivative
\begin{equation}
    \|\dop R^{k,(k-1)}\|_{\rho-2\delta} \leq \frac{k}{\rho} \|R^{k,(k-1)}\|_\rho 
\end{equation}
Note that the above Cauchy estimate is stronger than usual because the term is a finite polynomial having terms of only degree $k$. Then, substituting Lemma 2.1 to bound the generating function, 
\begin{equation}
    \|\chi_k\|_\rho \leq \frac{k^\tau}{\gamma} \|R^{k,(k-1)}\|_\rho,
\end{equation}
we obtain the bound on II as
\begin{equation}
    \|\text{II}\|_{\rho-3\delta} \leq \frac{k^{\tau+1}}{\rho \gamma} \|R^{k,(k-1)}\|_\rho^2
\end{equation}
Finally, for the spillover term, we add and subtract $\eta_k(\Lambda s)$ so that 
\begin{equation}
    \text{III} = \eta_k(y) - \chi_k(\Lambda s) = \left[ \eta_k(y) - \eta_k(\Lambda s) \right] + \left[ \eta_k(\Lambda s) - \chi_k(\Lambda s) \right] \eqqcolon \text{III}_a + \text{III}_b
\end{equation}
Let us consider first the III$_a$ term. Checking the domain, we need the distance between $\Lambda s$ and $y = \Lambda s - \Lambda \chi_k(s) + G^{>2,(k-1)}(s-\chi_k(s))$. Since $\Lambda$ is an isometry, and by our hypothesis on $\|G^{>2,(k-1)}\|_{\rho-2\delta} \leq \delta/2 < \delta$, we have
\begin{equation}
    \|y - \Lambda s\|_{\rho-3\delta} \leq \|\chi_k\|_{\rho-3\delta} + \|G^{>2,(k-1)}\|_{\rho-2\delta} \leq \delta + \delta/2
\end{equation}
Then, 
\begin{equation}
    \|y\| \leq \rho - 3\delta + \delta + \delta/2 = \rho-3\delta/2
\end{equation}
Hence, the line segment between $y$ and $\Lambda s$ stays inside $\mathcal{D}_{\rho-\delta}$. Since $\eta_k$ is an infinite series, we can't use the $k/\rho$ bound from Lemma 2; instead, we use the generic Cauchy estimate:
\begin{equation}
    \|\dop\eta_k\|_{\rho-3\delta} \leq \frac{1}{\delta} \|\eta_k\|_{\rho-2\delta} \leq \frac{1}{\delta} \|\chi_k\|_\rho,
\end{equation}
where the second inequality requires some proof. Recall that $\eta_k(y) = \chi_k(y+\eta_k(y))$. Assume that $y \in \mathcal{D}_{\rho-2\delta}$. By the threshold hypothesis, assume the deformations are small enough that $\|\eta_k\|_{\rho-2\delta} \leq \delta$. Then, $w = y + \eta_k(y) \in \mathcal{D}_{\rho-\delta}$, giving the bound
\begin{equation}
    \|\eta_k\|_{\rho-2\delta} = \|\chi_k(y + \eta_k(y))\|_{\rho-2\delta} \leq \|\chi_k\|_{\rho-\delta}
\end{equation}
Because $\chi_k$ is analytic on $\mathcal{D}_\rho$, its maximum on a smaller domain is bounded by its maximum on the larger domain. This proves the second inequality. Now, with the derivative bound and the distance bound, we can apply the Mean Value Theorem and the triangle inequality to obtain
\begin{align}
    \|\eta_k(y) - \eta_k(\Lambda s)\|_{\rho-3\delta} &\leq \frac{1}{\delta} \|\chi_k\|_\rho \left( \|\chi_k\|_\rho + \|G^{>2,(k-1)}\|_\rho \right) \\ 
    &\leq \frac{k^{2\tau}}{\delta \gamma^2} \|R^{k,(k-1)}\|_\rho^2 + \frac{k^\tau}{2 \gamma}  \|R^{k,(k-1)}\|_\rho 
\end{align}

Bounding III$_b$ is comparably routine:
\begin{align}
    \|\eta_k(\Lambda s) - \chi_k(\Lambda s)\|_{\rho-3\delta} &= \|\chi_k(\Lambda s + \eta_k(\Lambda s)) - \chi_k(\Lambda s)\|_{\rho-3\delta} \\
    &\leq \|\dop\chi_k\|_{\rho-\delta} \|\eta_k\|_{\rho-\delta} \\
    &\leq \|\dop\chi_k\|_{\rho-\delta} \|\chi_k\|_\rho \\
    &\leq \frac{k}{\rho} \|\chi_k\|_\rho^2 \\
    &\leq \frac{k^{2\tau+1}}{\rho \gamma^2} \|R^{k,(k-1)}\|_\rho^2 
\end{align}

The spillover generated by the nonlinear normal form terms is rather small, and bounded by
\begin{align}
    \|F^{>2,(k-1)}(s-\chi_k(s)) - F^{>2,(k-1)}(s)\|_{\rho-3\delta} &\leq \left( \sup_{s \in \mathcal{D}_{\rho-2\delta}} \| \dop F^{>2,(k-1)}\| \right) \|\chi_k(s)\|_{\rho-3\delta} \\ 
    &\leq \frac{B}{\delta} \|\chi_k(s)\|_{\rho-3\delta} \\ 
    &\leq \frac{1}{2}\|\chi_k(s)\|_{\rho-3\delta} \\ 
    &\leq \frac{k^\tau}{2\gamma} \|R^{k,(k-1)}\|_\rho
\end{align}
where $B$ is a constant bounding the nonlinear normal form terms on the larger domain, i.e., $\|F^{>2,(k-1)}\|_{\rho-2\delta} \leq B \leq \delta/2$, where the second inequality holds by hypothesis, the third comes from simplification, and the fourth comes from the bound on $\|\chi_k(s)\|_{\rho-2\delta}$ used previously. 

Putting all five bounds together, we obtain the desired result for part 2:
\begin{equation}
    \|R^{(k)}\|_{\rho-3\delta} \leq \|R^{>k,(k-1)}\|_\rho + \frac{k^\tau}{\gamma} \|R^{k,(k-1)}\|_\rho + \|R^{k,(k-1)}\|_\rho^2 \left[ \frac{k^{\tau+1}}{\rho \gamma} + \frac{k^{2\tau}}{\gamma^2} \left( \frac{1}{\delta} + \frac{k}{\rho} \right) \right]
\end{equation}
\end{proof}

With the completion of Lemma 3, we have successfully established the rigorous algebraic bounds for a single step of the normal form procedure. By tracking the deformations and spillovers, we have quantified exactly how much the remainder coefficients grow when transitioning from order $k-1$ to order $k$. However, to construct the final normal form up to an arbitrary order $N$, we must apply this transformation sequentially. The next critical step, addressed in Lemma 4, is to unroll this recursive relationship. By iterating the single-step bound from the initial base map up through $N$ steps, we derive a closed-form majorant that captures the cumulative, factorial growth of the coefficients resulting from the entire sequence of near-identity coordinate transformations. 

\begin{lemma}[Iterative Lemma]
Assume the hypotheses of Lemma 2 and 3 hold for all steps $k \leq N$. Define the quantities:
\begin{align}
    \rho_k &\coloneqq \rho_{k-1} - 3\delta, \qquad \delta = \frac{\rho}{6N}, \\
    \epsilon_k &= \|R^{(k)}\|_{\rho_k}
\end{align}

Define the linear small-divisor and quadratic penalties by
\begin{equation}
    L \coloneqq \frac{N^\tau}{\gamma}, \qquad Q \coloneqq \frac{2N^{\tau+1}}{\rho \gamma} + \frac{8 N^{2\tau+1}}{\rho \gamma^2},
\end{equation}
respectively. Suppose the initial perturbation $\epsilon_2$ satisfies the strict threshold condition
\begin{equation}
    \epsilon_2 \leq \frac{L}{Q(1+2L)^{N-2}}
\end{equation}
Then, for all $2 \leq k \leq N$, the quadratic remainder terms remain strictly bounded by the linear terms, $Q\epsilon_{k-1} \leq L$, and the sequence of remainders $\epsilon_k$ satisfies the recursive bound
\begin{equation}
    \epsilon_k \leq \epsilon_{k-1} (1+2L) \leq \epsilon_{k-1} \left( \frac{2N^\tau}{\gamma} \right)
\end{equation}
Consequently, unrolling this recursion yields the factorial-type bound:
\begin{equation}
    \epsilon_N \leq \epsilon_2 \left( \frac{2N^\tau}{\gamma} \right)^N
\end{equation}
\end{lemma}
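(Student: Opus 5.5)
The plan is an induction on $k$ from $2$ to $N$ that carries two statements at once: the \emph{smallness invariant} $Q\epsilon_{k-1}\le L$, and the one-step growth $\epsilon_k\le(1+2L)\epsilon_{k-1}$. The engine is the recursive remainder bound of Lemma 3.

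\textbf{Step 1: uniform constants.} First we make the constants in Lemma 3 uniform in $k$. For $k\le N$ we have $k^\tau\le N^\tau$ and $k^{2\tau}\le N^{2\tau}$. Since $\rho_{k-1}\ge\rho-3N\delta=\rho/2$, we get $1/\rho_{k-1}\le 2/\rho$. Since $\delta=\rho/(6N)$, we get $1/\delta=6N/\rho$. The quadratic bracket of Lemma 3 is then at most
\[
\frac{2N^{\tau+1}}{\rho\gamma}+\frac{N^{2\tau}}{\gamma^2}\Big(\frac{6N}{\rho}+\frac{2N}{\rho}\Big)=Q .
\]
This is exactly the definition of $Q$, which confirms the intended reading.

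\textbf{Step 2: one-step recursion.} Next we bound both $\|R^{k,(k-1)}\|_{\rho_{k-1}}$ and $\|R^{>k,(k-1)}\|_{\rho_{k-1}}$ by $\epsilon_{k-1}=\|R^{(k-1)}\|_{\rho_{k-1}}$. For the 1-norm on homogeneous pieces, the norm of the full remainder is the sum of the norms of its homogeneous components, so each piece is dominated by the total. Lemma 3 then yields
\[
\epsilon_k\le \epsilon_{k-1}+L\epsilon_{k-1}+Q\epsilon_{k-1}^2=\epsilon_{k-1}\,(1+L+Q\epsilon_{k-1}).
\]
If $Q\epsilon_{k-1}\le L$, this gives $\epsilon_k\le(1+2L)\epsilon_{k-1}$. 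The second inequality $1+2L\le 2N^\tau/\gamma$ needs $\gamma$ small relative to the number of steps; I would either add the harmless assumption $\gamma\le N^\tau$ or simply keep $(1+2L)$ in the final bound, noting that in the Diophantine regime $L\ge1$ and so $1+2L\le 3L$. Pinning down this constant cleanly is a bookkeeping matter only.

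\textbf{Step 3: closing the induction.} This step propagates the invariant. Assume $\epsilon_j\le(1+2L)^{j-2}\epsilon_2$ for all $j\le k-1$. Then
\[
Q\epsilon_{k-1}\le Q(1+2L)^{N-2}\epsilon_2\le L
\]
by the threshold hypothesis, so the invariant holds at step $k$ and the recursion applies. The base case $k=2$ is the hypothesis itself. Unrolling gives $\epsilon_N\le\epsilon_2(1+2L)^{N-2}\le\epsilon_2(2N^\tau/\gamma)^N$, where the last step also absorbs the slack in the exponent.

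\textbf{Main obstacle.} I expect the hard part to be checking that the standing hypotheses of Lemmas 2 and 3 survive every step, so that Lemma 3 may legitimately be invoked at step $k$. These hypotheses are the threshold $\frac{k^\tau}{\delta\gamma}\|R^{k,(k-1)}\|\le\frac14$ and the domain bound $\|G^{>2,(k-1)}\|_{\rho-2\delta}\le\delta/2$. The statement says to \emph{assume} them, which I would do explicitly. For the first, I would try to note that the invariant already forces it: $Q\epsilon_{k-1}\le L$ gives
\[
\epsilon_{k-1}\le \frac{L}{Q}\le \frac{N^\tau/\gamma}{8N^{2\tau+1}/(\rho\gamma^2)}=\frac{\gamma\rho}{8N^{\tau+1}},
\]
and hence $\frac{N^\tau}{\delta\gamma}\epsilon_{k-1}\le \frac{6N}{8N}=\frac34$. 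That is within a constant factor of the required $\tfrac14$, which suggests the hypotheses are nearly self-consistent, but not quite. I would therefore keep them as stated assumptions rather than derive them.
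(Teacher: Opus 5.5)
Your argument is the paper's own. You uniformize the bracket of Lemma 3 into $Q$ via $k\le N$, $\rho_{k-1}\ge\rho/2$ and $\delta=\rho/(6N)$. You dominate both homogeneous pieces of $R^{(k-1)}$ by $\epsilon_{k-1}$ to get $\epsilon_k\le\epsilon_{k-1}(1+L+Q\epsilon_{k-1})$. You then propagate $Q\epsilon_{k-1}\le L$ inductively from the threshold on $\epsilon_2$ and unroll to $\epsilon_N\le\epsilon_2(1+2L)^{N-2}$. Your $1/\rho_{k-1}\le 2/\rho$ is the correct form of the paper's ``$2\rho$''. Your $3/4$-versus-$1/4$ remark is also accurate, and the paper likewise simply assumes the hypotheses of Lemmas 2 and 3.

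The point to correct is your diagnosis of the middle inequality $\epsilon_{k-1}(1+2L)\le\epsilon_{k-1}(2N^\tau/\gamma)$.
\begin{itemize}
\item Since $2N^\tau/\gamma=2L<1+2L$, this inequality fails for every $\gamma$ and every $N$ whenever $\epsilon_{k-1}>0$. No smallness assumption such as $\gamma\le N^\tau$ rescues it.
\item The paper reaches it only through an arithmetic slip: it writes $\epsilon_{k-1}(1+2L)\le\epsilon_{k-1}(4L)=\epsilon_{k-1}(2N^\tau/\gamma)$, although $4L=4N^\tau/\gamma$.
\item Your second option is the correct repair: keep the per-step factor $1+2L$ and spend the two spare powers at the end. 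It does deliver the stated $\epsilon_N\le\epsilon_2(2N^\tau/\gamma)^N$.
\end{itemize}
However, ``absorbs the slack in the exponent'' needs $(1+\tfrac{1}{2L})^{N-2}\le 4L^2$, and $L\ge 1$ alone does not give this. For $L=1$, $N=6$ one has $(1+2L)^{N-2}=81>64=(2L)^N$. The bound does follow once you also have $2L\ge N-2$, since then $(1+\tfrac{1}{2L})^{N-2}\le e^{(N-2)/(2L)}\le e<4\le 4L^2$. Both conditions hold under the paper's standing assumption $\gamma<1$, which gives $L\ge N^\tau\ge N$.
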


\begin{proof}
The proof is split into a few steps. First, we must simplify and bound the coefficients from Lemma 3.2. Next, we establish the linear recursive inequality. Finally, we iterate $N-2$ times to obtain the desired result. 

Consider the $k$-th step. The domain is $\rho_k = \rho_{k-1} - 3\delta$. Applying Lemma 3 to the $k$-th step gives
\begin{equation}
    \|R^{(k)}\|_{\rho_k} \leq \|R^{>k,(k-1)}\|_{\rho_{k-1}} + \frac{k^{\tau}}{\gamma} \|R^{k,(k-1)}\|_{\rho_{k-1}} + \| R^{k,(k-1)} \|_{\rho_{k-1}}^2 \left[ \frac{k^{\tau+1}}{\rho_{k-1} \gamma} + \frac{k^{2\tau}}{\gamma^2} \left( \frac{1}{\delta} + \frac{k}{\rho_{k-1}} \right) \right]. 
\end{equation}
The first term is bounded above by $\epsilon_{k-1}$, since $R^{>k,(k-1)}$ is just the remainder $R^{(k-1)}$ without the degree $k$ terms. To bound the second term, we substitute $\|R^{k,(k-1)}\| \leq \epsilon_{k-1}$ to find
\begin{equation}
    \frac{k^\tau}{\gamma} \|R^{k,(k-1)}\|_{\rho_{k-1}} \leq \frac{N^\tau}{\gamma} \epsilon_{k-1}
\end{equation}
Finally, to bound the third term, we substitute $\delta = \rho/(6N)$, and use the bounds $1/\rho_{k-1} \leq 2\rho$ and $k \leq N$, to obtain
\begin{align}
    \| R^{k,(k-1)} \|_{\rho_{k-1}}^2 \left[ \frac{k^{\tau+1}}{\rho_{k-1} \gamma} + \frac{k^{2\tau}}{\gamma^2} \left( \frac{1}{\delta} + \frac{k}{\rho_{k-1}} \right) \right] &\leq \epsilon_{k-1}^2 \left[ \frac{2N^{\tau+1}}{\rho \gamma} + \frac{N^{2\tau}}{\gamma^2} \left( \frac{6N}{\rho} + \frac{2N}{\rho} \right) \right] \\ 
    &\leq \epsilon_{k-1}^2 \left[ \frac{2N^{\tau+1}}{\rho \gamma} + \frac{8N^{2\tau+1}}{\rho \gamma^2} \right]
\end{align}
Putting the three terms together, we obtain the simplified bound on the remainder:
\begin{equation}
    \epsilon_k \leq \epsilon_{k-1}\left( 1 + L + Q\epsilon_{k-1} \right),
\end{equation}
where $L$ and $Q$ are defined in the statement of the Lemma. To rigorously bound this by a purely growth factor, for example, 
\begin{equation}
    \epsilon_k \leq \epsilon_{k-1} (1+2L),
\end{equation}
we must strictly prove that $Q\epsilon_{k-1} \leq L$ at every step. We will prove this by induction. 

The base case is $k=2$. From our threshold hypothesis, we have
\begin{equation}
    \epsilon_2 \leq \frac{L}{Q(1+2L)^{N-2}}
\end{equation}
Since $1 + 2L > 1$ and $N \geq 2$, it is immediately true that $\epsilon_2 \leq L/Q$, i.e., $Q\epsilon_2 \leq L$. Substituting this into the remainder bound from Lemma 3 yields:
\begin{equation}
    \epsilon_3 \leq \epsilon_2 (1 + L + Q\epsilon_2) \leq \epsilon_2 (1+2L), 
\end{equation}
thus, showing the base case holds. 

Now, for the inductive step, assume that for all $j$ up to $k-1$, the sequence satisfies $\epsilon_j \leq \epsilon_{j-1}(1+2L)$. Unrolling this assumption from normalization steps 2 to $k-1$ gives the absolute bound:
\begin{equation}
    \epsilon_{k-1} \leq \epsilon_2 (1 + 2L)^{k-3}
\end{equation}
We must prove the bound holds for step $k$. First, we multiply both sides by $Q$: 
\begin{equation}
    Q\epsilon_{k-1} \leq Q \left[ \epsilon_2 (1 + 2L)^{k-3} \right]
\end{equation}
Since $k \leq N$, we have $(1+2L)^{k-3} \leq (1+2L)^{N-3} < (1+2L)^{N-2}$. Applying our initial hypothesis for $\epsilon_2$, we see that 
\begin{equation}
    Q\epsilon_{k-1} < Q \left( \frac{L}{Q(1+2L)^{N-2}} \right) (1+2L)^{N-2} = L,
\end{equation}
proving the inductive step. 

Since $Q\epsilon_{k-1} \leq L$ holds inductively, the recursion from Lemma 3 simplifies to
\begin{equation}
    \epsilon_k \leq \epsilon_{k-1} (1+L+Q\epsilon_{k-1}) \leq \epsilon_{k-1}(1+2L)
\end{equation}
Finally, since $\gamma < 1$, $N \geq 1$, $2L > 1$, we have that $1 + L \leq 2L$ and hence
\begin{equation}
    \epsilon_k \leq \epsilon_{k-1} (1+2L) \leq \epsilon_{k-1} (4L) = \epsilon_{k-1} \left( \frac{2N^\tau}{\gamma} \right)
\end{equation}
Unrolling the recursion, and using that $N-2 < N$ in the final bound, we obtain the desired result:
\begin{equation}
    \epsilon_N \leq \epsilon_2 \left( \frac{2N^\tau}{\gamma} \right)^N 
\end{equation} 
\end{proof}

\subsubsection{Truncation Remainder Bounds}

Lemma 4 provides a strictly algebraic bound on the size of the remainder coefficients after $N$ normalization steps. However, to evaluate the physical confinement of trajectories, we must translate this coefficient bound into a geometric bound on the actual remainder function evaluated over a specific physical domain. In Lemma 5, we transition from the space of formal coefficients back to the state space. By evaluating the remainder on a reduced polydisc of physical radius $a$, we establish a strict supremum bound on the magnitude of the perturbation that ultimately drives the trajectory away from surrounding sticky tori. This truncated remainder bound is the final component required before we can optimize the truncation order $N$ and deduce the effective stability time. 

\begin{lemma}[Truncated Remainder Bound]
Let the assumptions of Lemma 4 hold. Let $a > 0$ be a target physical radius such that $a < \rho/2$. Then, the remainder $R^{(N)}$ after $N$ normalization steps, evaluated on the strictly smaller polydisc $\mathcal{D}_a$, satisfies the exact algebraic bound:
\begin{equation}
    \|R^{(N)}\|_a \leq \epsilon_2 \left( \frac{2a}{\rho} \right) (AaN^\tau)^N, \qquad A = \frac{4}{\rho \gamma}
\end{equation}
\end{lemma}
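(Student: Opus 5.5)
The plan is to combine the coefficient bound of Lemma 4 with the scaling property of the polynomial 1-norm under shrinking of the polydisc. The key observation is that $R^{(N)}$ contains only monomials of degree at least $N+1$. For any homogeneous piece of degree $m$, the norm satisfies $\|P_m\|_a = (a/r)^m \|P_m\|_r$ for $a<r$, exactly as in the proof of Lemma 2(2). Summing over $m \geq N+1$ therefore gives
\begin{equation}
    \|R^{(N)}\|_a \leq \left( \frac{a}{\rho_N} \right)^{N+1} \|R^{(N)}\|_{\rho_N} = \left( \frac{a}{\rho_N} \right)^{N+1} \epsilon_N .
\end{equation}
So the first step is simply to record this degree-based contraction. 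It is valid because the 1-norm is a sum of nonnegative terms, each scaling by at least $(a/\rho_N)^{N+1}$.

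The second step is to control $\rho_N$ from below. With $\delta = \rho/(6N)$ and $\rho_k = \rho_{k-1} - 3\delta$, after at most $N$ steps we have $\rho_N \geq \rho - 3N\delta = \rho/2$. In particular, the hypothesis $a < \rho/2$ guarantees $a < \rho_N$, so the contraction above is legitimate and gives $(a/\rho_N)^{N+1} \leq (2a/\rho)^{N+1}$.

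The third step is to insert Lemma 4, namely $\epsilon_N \leq \epsilon_2 (2N^\tau/\gamma)^N$. Combining this with the previous step yields
\begin{equation}
    \|R^{(N)}\|_a \leq \epsilon_2 \left( \frac{2a}{\rho} \right) \left( \frac{2a}{\rho} \right)^N \left( \frac{2N^\tau}{\gamma} \right)^N = \epsilon_2 \left( \frac{2a}{\rho} \right) \left( \frac{4 a N^\tau}{\rho\gamma} \right)^N ,
\end{equation}
which is exactly the claimed bound with $A = 4/(\rho\gamma)$. The extra factor $2a/\rho$ is the one surplus power coming from the degree being $N+1$ rather than $N$.

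The only delicate point is the first step: we must make sure that $R^{(N)}$ is genuinely $\mathcal{O}_{N+1}$ with respect to the norm used in Lemma 4. In particular, $\epsilon_N$ must bound the full tail measured at radius $\rho_N$, not just a single homogeneous piece. If there is ambiguity about whether $\epsilon_N$ refers to the remainder on $\mathcal{D}_{\rho_N}$ or a slightly different domain, I would instead use the crude lower bound $\rho_N \geq \rho/2$ together with the monotonicity of the norm in the radius. This still gives the factor $(2a/\rho)^{N+1}$, so the argument goes through either way.
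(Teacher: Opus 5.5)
Your proposal is correct and follows the paper's proof essentially step by step: you take $\rho_N=\rho/2$ (you write $\rho_N \geq \rho/2$, which suffices), contract by $(a/\rho_N)^{N+1}$ using that $R^{(N)}$ starts at degree $N+1$, insert the Lemma 4 bound on $\epsilon_N$, and factor out one power of $2a/\rho$ to get $A=4/(\rho\gamma)$. The only difference is cosmetic: you justify the contraction by the exact scaling of homogeneous pieces under the weighted 1-norm rather than by the paper's appeal to a Schwarz-lemma/Cauchy argument, and that is the more natural justification for the norm used here.
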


\begin{proof}
By the geometric definition of domain shrinkage, the radius of analyticity after $N$ normalization steps is:
\begin{equation}
    \rho_N = \rho - \sum_{k=1}^N 3\delta = \rho - 3N \left( \frac{\rho}{6N} \right) = \rho - \frac{\rho}{2} = \frac{\rho}{2}
\end{equation}
Thus, by Lemma 4, the remainder $R^{(N)}$ is a well-defined analytic function on the polydisc $\mathcal{D}_{\rho/2}$ and its norm is bounded by 
\begin{equation}
    \|R^{(N)}\|_{\rho/2} = \epsilon_N \leq \epsilon_2 \left( \frac{2N^\tau}{\gamma} \right)^N
\end{equation}
Because $N$ steps of the normal form procedure have been completed, all non-resonant terms up to and including degree $N$ have been eliminated from the remainder. Consequently, the Taylor series expansion of $R^{(N)}$ around the origin begins with terms of at least degree $N+1$. 

By the general Schwarz Lemma (or standard Cauchy bounds), the norm of such a function on a smaller interior domain $\mathcal{D}_a$ (where $a < \rho_N$) scales strictly by the ratio of the radii to the power of the lowest non-vanishing degree. Therefore, projecting from $\rho_N$ down to $a$, we obtain:
\begin{equation}
    \|R^{(N)}\|_a \leq \|R^{(N)}\|_{\rho_N} \left( \frac{a}{\rho_N} \right)^{N+1}
\end{equation}
Substituting $\rho_N = \rho /2$ yields the spatial decay factor
\begin{equation}
    \|R^{(N)}\|_a \leq \epsilon_N \left( \frac{2a}{\rho} \right)^{N+1}
\end{equation}
Finally, inserting the recursive bound from Lemma 4, we obtain
\begin{equation}
    \|R^{(N)}\|_a \leq \epsilon_2 \left( \frac{2N^\tau}{\gamma} \right)^N \left( \frac{2a}{\rho} \right)^{N+1}
\end{equation}
Factoring out one power of $(2r\rho)$ aligns the exponents
\begin{equation}
    \|R^{(N)}\|_a \leq \epsilon_2  \left( \frac{2a}{\rho} \right) \left( \frac{4aN^\tau}{\rho \gamma} \right)^N
\end{equation}
Finally, defining the geometric constant $A = \frac{4}{\rho \gamma}$ completes the proof. 
\end{proof}

% III.B. Optimal Normalization Order 
\subsection{Optimal Normalization Order}

Having established a rigorous geometric bound on the truncated remainder in Lemma 5, we are now positioned to address the fundamental analytical competition inherent to asymptotic normal form series. The bound on the evaluated remainder $R^{(N)}$ over a physical polydisc of radius $a$ exhibits two strictly opposing behaviors as the normalization order $N$ increases. On one hand, advancing the normal form to a higher degree pushes the un-normalized perturbation to a higher polynomial order. Because the domain radius $a$ is chosen to be small, this causes the magnitude of the remainder to decay geometrically toward zero. On the other hand, the iterative accumulation of small divisors from the homological equations guarantees that the majorant coefficients bounding this remainder grow factorially with $N$. Consequently, the infinite series is formally divergent \cite{giorgilli2022notes}. For any fixed, non-zero physical radius $a$, the factorial growth of the coefficients will inevitably overtake the geometric decay of the high-order polynomials. To obtain the strongest possible finite-time stability bound, we must halt the normalization procedure at the precise threshold where this divergence begins. This optimal truncation order, $N_{\text{opt}}$, minimizes the magnitude of the remainder and is intrinsically coupled to the chosen domain radius. We formalize this minimization in the following proposition. 

\begin{proposition}[Optimal Truncation]
Assume the remainder after $N$ normalization steps satisfies the bound:
\begin{equation}
    \|R^{(N)}\|_a \leq \epsilon_2 \left( \frac{2a}{\rho} \right) (AaN^\tau)^N, \qquad A = \frac{4}{\rho \gamma}
\end{equation}
with $\epsilon_2, \rho, \gamma, \tau > 0$ constants independent of $N$ and $a$. For a fixed physical radius $a$, satisfying the strict algebraic condition
\begin{equation}
    a \leq \frac{1}{A(2e)^\tau}
\end{equation}
the optimal number of normalization steps, $N_{\text{opt}} \geq 2$, that minimizes the remainder bound is
\begin{equation}
    N_{\text{opt}} = \left\lfloor \frac{1}{e} \left( \frac{1}{Aa} \right)^{\frac{1}{\tau}} \right\rfloor 
\end{equation}
Evaluated at $N_{\text{opt}}$, the remainder is exponentially small with respect to $a$, satisfying
\begin{equation}
    \|R^{(N)}\|_a \leq \epsilon_2 \left( \frac{2a}{\rho} \right) \exp(-\tau N_{\text{opt}})
\end{equation}
\end{proposition}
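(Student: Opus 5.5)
The plan is the standard Nekhoroshev optimization: treat the $N$-dependent part of the bound from Lemma 5 as a function of a continuous variable, minimize its logarithm, and then pass to the integer $N_{\text{opt}}$. Set
\begin{equation}
    g(x) \coloneqq x\log(Aa) + \tau x \log x, \qquad x > 0,
\end{equation}
so that $(AaN^\tau)^N = \exp(g(N))$ and the prefactor $\epsilon_2(2a/\rho)$ does not depend on $N$. We have $g'(x) = \log(Aa) + \tau\log x + \tau$ and $g''(x) = \tau/x > 0$, so $g$ is strictly convex. Its unique minimizer satisfies $\log(Aa\,x^\tau) = -\tau$, that is,
\begin{equation}
    x_* = \frac{1}{e}\left(\frac{1}{Aa}\right)^{1/\tau},
\end{equation}
which is the expression inside the floor. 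Hence the optimal integer order is $\lfloor x_*\rfloor$ or $\lceil x_*\rceil$, and we take $N_{\text{opt}} = \lfloor x_*\rfloor$ as in the statement.

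Next we check that $N_{\text{opt}}\ge 2$. The hypothesis $a \le 1/(A(2e)^\tau)$ is equivalent to $(1/(Aa))^{1/\tau} \ge 2e$, so $x_* \ge 2$ and therefore $\lfloor x_*\rfloor \ge 2$. This is the role of the algebraic condition in the statement.

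To obtain the exponential estimate, take $N = N_{\text{opt}} \le x_*$. Then $AaN^\tau \le Aa\,x_*^\tau = e^{-\tau}$, and so $(AaN^\tau)^N \le e^{-\tau N_{\text{opt}}}$. Substituting into the bound of Lemma 5 gives the claimed inequality $\|R^{(N_{\text{opt}})}\|_a \le \epsilon_2(2a/\rho)\exp(-\tau N_{\text{opt}})$. Since $N_{\text{opt}} \ge x_* - 1$, this is $\le \epsilon_2 (2a/\rho)\,e^{\tau}\exp\bigl(-\tfrac{\tau}{e}(Aa)^{-1/\tau}\bigr)$, which makes the exponential smallness in $a$ explicit.

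The main obstacle is that the floor does not minimize the discrete bound exactly: by convexity of $g$ the true integer minimizer may be $\lceil x_*\rceil$. I would therefore state the optimality precisely as ``$N_{\text{opt}}$ minimizes $g$ up to the choice between $\lfloor x_*\rfloor$ and $\lceil x_*\rceil$'', and note that the floor is the one for which $AaN^\tau \le e^{-\tau}$ holds, which is the estimate actually needed. A further point to check, rather than a computational difficulty, is that the hypotheses of Lemmas 4–5 (the threshold on $\epsilon_2$, which depends on $N$, and $a<\rho/2$) hold at $N = N_{\text{opt}}$. The second follows from the condition on $a$, since $A a \le (2e)^{-\tau} < 1$ gives $a < \rho\gamma/4$, which is below $\rho/2$ whenever $\gamma<2$ (the proof of Lemma 4 already takes $\gamma < 1$); for larger $\gamma$ it has to be assumed. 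The threshold on $\epsilon_2$ is part of the standing assumption, so I would simply record it as being required at $N_{\text{opt}}$.
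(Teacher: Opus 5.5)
Your proposal is correct and follows the paper's proof essentially step for step. You minimize $N\ln(Aa)+\tau N\ln N$ over continuous $N$ using strict convexity (second derivative $\tau/N>0$), and take the floor of $x_*=\frac{1}{e}(Aa)^{-1/\tau}$. You then read $N_{\text{opt}}\ge 2$ directly off the condition on $a$, and use $N_{\text{opt}}\le x_*$ to get $AaN_{\text{opt}}^{\tau}\le e^{-\tau}$; the paper leaves this last step implicit. Your extra remarks are accurate refinements that the paper's argument passes over, not a different route: the integer minimizer may be $\lceil x_*\rceil$ rather than the floor, and the $N$-dependent hypotheses of Lemmas 4--5 must hold at $N=N_{\text{opt}}$.
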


\begin{proof}
To prove the upper bound of the remainder, we isolate the factor that depends on the normalization order $N$. Let
\begin{equation}
    g(N) = (AaN^\tau)^N,
\end{equation}
where $N$ here is thought of as a continuous (i.e., not discrete) variable. Since the pre-factor $\epsilon_2 (2a/\rho) > 0$ and is independent of $N$, it suffices to minimize $g$. Moreover, since $g(N) > 0$ for $N \geq 1$, the minimum of $g(N)$ coincides with the minimum of $\ln g(N)$, so we will minimize the latter:
\begin{equation}
    \ln g(N) = N \ln (AaN^\tau) = N \ln (Aa) + \tau N \ln N
\end{equation}
Computing the derivative with respect to $N$, we get 
\begin{equation}
    \frac{d}{dN} \ln g(N) = \ln (AaN^\tau) + \tau 
\end{equation}
Setting the derivative to zero yields
\begin{equation}
    \ln(AaN^\tau) = -\tau, 
\end{equation}
and exponentiating gives 
\begin{equation}
    AaN^\tau = e^{-\tau}
\end{equation}
Solving for $N$, we obtain
\begin{equation}
    N = \frac{1}{e} \left( \frac{1}{Aa} \right)^{\frac{1}{\tau}}
\end{equation}
Note that this is a minimum by 
\begin{equation}
    \frac{d^2}{dN^2} \ln g(N) = \frac{\tau}{N} > 0
\end{equation}
Because the normal form procedure requires a natural number of steps, we select the optimal discrete order by taking the floor
\begin{equation}
    N_{\text{opt}} = \left\lfloor \frac{1}{e} \left( \frac{1}{Aa} \right)^{\frac{1}{\tau}} \right\rfloor 
\end{equation}
Let us show now that $N_{\text{opt}} \geq 2$. If $a \leq \frac{1}{A(2e)^\tau}$, we can simply plug in this value of $a$ to the expression for $N_{\text{opt}}$ to find 
\begin{equation}
N_{\text{opt}} \geq \left\lfloor \frac{1}{e} \left( \frac{1}{A\left(\frac{1}{A(2e)^\tau}\right)} \right)^{\frac{1}{\tau}} \right\rfloor = \left\lfloor \frac{1}{e} (2e)^{\tau \cdot \frac{1}{\tau}} \right\rfloor = 2
\end{equation}

To compute the magnitude of the remainder at this optimal order, we substitute the exact optimal relationship back into the continuous function $g(N)$:
\begin{equation}
    g(N_{\text{opt}}) = (AaN_{\text{opt}})^{N_{\text{opt}}} \leq (e^{-\tau})^{N_{\text{opt}}} = \exp(-\tau N_{\text{opt}})
\end{equation}
Multiplying by the $N$-independent pre-factor, the total optimal remainder bound is
\begin{equation}
    \|R^{(N)}\|_a \leq \epsilon_2 \left( \frac{2a}{\rho} \right) \exp(-\tau N_{\text{opt}})
\end{equation}
This completes the proof. 
\end{proof}

With the derivation of the optimal truncation order and the corresponding exponentially small upper bound on the remainder, our strictly algebraic quantification of the normal form procedure is complete. Throughout this section, we have systematically traced the mapping from its local Taylor expansion through a sequence of near-identity symplectic conjugations, rigorously accounting for both the complex deformation of the analytical domain and the factorial accumulation of small divisors. By stopping the normalization process at $N_{\text{opt}}$, we have successfully constructed a finite-order, integrable approximation of the local dynamics whose truncation error is bounded by a quantity that is exponentially small with respect to the distance from the elliptic fixed point. 

% IV. Radial Drift and Effective Stability Estimates 
\section{Radial Drift and Effective Stability Estimates}
In the preceding sections, we have systematically reduced the local dynamics around the elliptic fixed point into a finite-order normal form. By optimizing the truncation order $N_{\text{opt}}$, we established that the truncated remainder, i.e., the un-normalized perturbation, is bounded by a quantity that is exponentially small with respect to the radius of the chosen physical domain. Up to this point, our analysis has been strictly algebraic, focusing on the convergence properties and coefficient growth of the formal power series. 

We now transition to physical stability. The central objective of this section is to translate the exponentially small remainder bound into rigorous confinement guarantees for trajectories near elliptic fixed points of (symplectic) Poincar\'e maps. In a perfectly normalized system governed solely by the integrable twist map, the orbital radii (amplitudes) are exact integrals of motion, and trajectories are perpetually confined to invariant tori. In the full nonlinear system, however, the truncated remainder acts as a persistent perturbation that allows the orbital radii to slowly drift over time, potentially leading to instability. 

By quantifying the maximum rate of this radial drift, we can bound the total deviation of a trajectory over macroscopic timescales. This approach culminates in a Nekhoroshev-type effective stability theorem, yielding an exponentially long lower bound on the time a trajectory is guaranteed to remain confined within a specified physical neighborhood. 

\subsection{Bounds on the Radial Drift}

To derive the macroscopic stability time, we must first establish the worst-case rate at which a trajectory can deviate from its unperturbed invariant torus. We define this deviation strictly in terms of the orbital radii $r \in \mathbb{R}^n$, where $r_j = |z_j|$. Under the iterated map, $G^{(N_{\text{opt}})}$, the normal form terms $F$ perfectly preserve the distance from the origin, contributing only pure phase rotations. Therefore, any change in the amplitude (i.e., the radial drift $\Delta r$) is driven exclusively by the remainder $R^{(N_{\text{opt}})}$. By projecting our optimally bounded remainder onto the radial coordinates, we can formulate a strict geometric upper bound on the change in the orbital radius per iteration of the return map. 

\begin{proposition}[Radial Drift Bound]
Let $(z,\bar{z})$ be the complexified normal form coordinates in the local polydisc $\mathcal{D}_r$, where $z_i = x_i + \imag y_i$. For each degree of freedom, $j = 1,\ldots,n$, we define the orbital radius in the $i$-th phase plane as $r_i^2 = z_i \bar{z}_i$. Under the discrete Poincar\'e map $G^{N_{\text{opt}}}$, the one-step drift in the radius $\Delta r_i \coloneqq r_i^{(1)} - r_i^{(0)}$ is strictly bounded by the exponentially small remainder. Specifically, for any point in $\mathcal{D}_a$:
\begin{equation}
|\Delta r_i| \leq \| R^{(N_{\text{opt}})} \|_a
\end{equation}
where $\| R^{(N_{\text{opt}})} \|_r \leq \epsilon_{N_{\text{opt}}}$, the exponentially small upper bound derived in Proposition 5. 
\end{proposition}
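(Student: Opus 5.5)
The plan is to write one step of the normalized map componentwise and separate the radius-preserving twist part from the remainder. In normal form coordinates, with $N = N_{\text{opt}}$, the map reads
\begin{equation}
    z_i' = F_i^{(N)}(z,\bar z) + R_i^{(N)}(z,\bar z), \qquad F_i^{(N)}(z,\bar z) = z_i \exp\left(\imag \Omega_i^{(N)}(r^2)\right),
\end{equation}
as in Section~\ref{sec:InvariantStructures}. On the real subspace $\bar z = \overline{z}$, i.e.\ for physical points, the frequencies $\Omega_i^{(N)}$ are real, since the coefficients $\beta_{i,m}$ are real. Hence $|F_i^{(N)}(z,\bar z)| = |z_i| = r_i^{(0)}$: the twist part leaves every orbital radius unchanged. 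I will state this reduction explicitly. It is exactly the point where the conservation of the actions $I_i$ under $F^{(N)}$ is used.

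The core estimate is then the reverse triangle inequality applied to $r_i^{(1)} = |z_i'|$:
\begin{equation}
    |\Delta r_i| = \bigl| |F_i^{(N)} + R_i^{(N)}| - |F_i^{(N)}| \bigr| \leq |R_i^{(N)}(z,\bar z)|.
\end{equation}
Next I bound the pointwise value by the polynomial 1-norm. For $|s_m| < a$ in every component, $|R_i^{(N)}(s)| \leq \sum_j |c_j| a^{|j|} = \|R_i^{(N)}\|_a \leq \|R^{(N)}\|_a$, using the max-over-components definition of the vector norm. The same comparison works term by term for the infinite tail, because Lemma 5 already controls the 1-norm of the full analytic remainder on $\mathcal{D}_a \subset \mathcal{D}_{\rho/2}$. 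Combining this with Lemma 5 and the Optimal Truncation proposition gives the stated exponentially small bound.

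I expect the main obstacle to be justifying that $|F_i^{(N)}| = |z_i|$ holds exactly, not just up to the truncation order. The construction of Section 2 produces a polynomial $F^{(N)}$ containing only resonant monomials $z_i (z_m\bar z_m)^{j_m}$. Strictly speaking, it is $z_i(1 + \imag\,\text{poly}(r^2))$, not an exact exponential, and because the transformations are not canonical the coefficients need not be purely imaginary. My first attempt would be to redefine the split: keep the exact rotation $z_i e^{\imag \Omega_i(r^2)}$ as the twist part, and move the discrepancy into the remainder. That discrepancy is of degree at least $N+1$ if the resonant coefficients agree with a unitary twist through degree $N$. If that agreement fails, I would fall back on a weaker statement, bounding $|\Delta r_i|$ by $\|R^{(N)}\|_a$ plus the norm of the non-unitary part of $F^{(N)}$, and flag the additional term.
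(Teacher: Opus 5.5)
Your argument is the same as the paper's. Both write $z_i^{(1)} = F_i^{(N_{\text{opt}})} + R_i^{(N_{\text{opt}})}$, assert $|F_i^{(N_{\text{opt}})}| = r_i^{(0)}$ from the twist form $z_i\exp[\imag\omega_i(r)]$, apply the reverse triangle inequality, and take the supremum over $\mathcal{D}_a$. Your remark that the 1-norm dominates the pointwise value is a step the paper leaves implicit.

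The obstacle you flag is genuine, and the paper's proof does not address it; it simply writes the exponential form. The truncated normal form is a polynomial $z_i p_i(r^2)$. If $|p_i|\equiv 1$ on an open set, then $p_i\bar p_i=1$ as polynomials, which forces $p_i$ to be constant. So the exact isometry fails for any nontrivial twist.

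The condition in your first repair does hold. $F^{(N)}$ and the symplectic Birkhoff normal form are both normal forms of $G$ through degree $N$. With only trivial resonances, the homological equation forces the conjugacy between them to be resonant, of the form $z_i\mapsto z_i g_i(r^2)$, through that degree. Such maps send a unitary twist $z_i e^{\imag\Omega_i(r^2)}$ to another unitary twist. Hence $r_i\bigl||p_i(r^2)|-1\bigr| = \mathcal{O}(a^{N+1})$.

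However, this term is not controlled by $\epsilon_N$ from Lemma 5. A rigorous version of the proposition is therefore your fallback, $|\Delta r_i| \le \|R^{(N_{\text{opt}})}\|_a + r_i^{(0)}\bigl||p_i((r^{(0)})^2)|-1\bigr|$. The alternative is to redo the remainder estimate relative to the exact twist.
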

\begin{proof}
In normalized coordinates, the Poincar\'e map decomposes into an integrable twist map, $F^{(N_{\text{opt}})}$, and the truncated remainder, $R^{(N_{\text{opt}})}$, where the $i$-th element of the integrable twist map is given by:
\begin{equation}
    F_i^{(N_{\text{opt}})}(z^{(0)},\bar{z}^{(0)}) = z_i^{(0)} \exp\left[\imag \omega_i(r) \right] 
\end{equation}
Consequently, the twist map is an exact isometry on the radii, hence preserving the radius
\begin{equation}
    |F_i^{(N_{\text{opt}})}| = |z_i^{(0)}| = r_i^{(0)}
\end{equation}
Applying the map, the updated $i$-th coordinate is
\begin{equation}
    z_i^{(1)} = F_i^{(N_{\text{opt}})}(z^{(0)},\bar{z}^{(0)}) + R_i^{(N_{\text{opt}})}(z^{(0)},\bar{z}^{(0)})
\end{equation}
To bound the drift of the radius, $r_i^{(1)} = |z_i^{(1)}|$, we just apply the reverse triangle inequality
\begin{equation}
|r_i^{(1)} - r_i^{(0)}| = \left| |z_i^{(1)}| - |F_i^{(N_{\text{opt}})}| \right| \leq \left| z_i^{(1)} - F_i^{(N_{\text{opt}})} \right| = |R_i^{(N_{\text{opt}})}|
\end{equation}
Taking the supremum over the domain $\mathcal{D}_a$ immediately yields the result
\begin{equation}
    |\Delta r_i| \leq \| R_i^{(N_{\text{opt}})}\|_a
\end{equation}
as desired. 
\end{proof}

Proposition 7 rigorously establishes that the single-step deviation in the orbital radii is constrained by the same exponentially small bound that governs the optimal remainder. Because the normal form perfectly decouples the amplitude from the phase up to order $N_{\text{opt}}$, the primary mechanism for instability is reduced to an exceptionally slow diffusion driven by this perturbation. 

% IV.B. Stability Time and Confinement Region 
\subsection{Stability Time and Confinement Region}
With the maximum single-step radial drift quantified, the final requirement is to project this discrete drift over a long sequence of iterations. By determining how many iterations it takes for this exponentially small radial drift to accumulate to a macroscopic threshold--specifically, the point at which a trajectory crosses the boundary of our defined analytical domain. By dividing the total allowable spatial deviation by the maximum single-step drift, we arrive at the central dynamical result of this local analysis: a Nekhoroshev-type stability estimate. 

\begin{theorem}[Effective Stability of an Elliptic Fixed Point of a Poincar\'e Map]
Let $G: \mathcal{D}_\rho \to \mathbb{C}^{2n}$ be a symplectic, real-analytic Poincar\'e map with an elliptic fixed point at the origin. Assume the linearized frequencies $\omega \in \mathbb{R}^n$ satisfy a Diophantine condition with constants $\gamma > 0$ and $\tau \geq n$. Let the nonlinear part of the map be bounded by $M$ on $\mathcal{D}_\rho$. 

Let $a$ be a target confinement radius satisfying the strict threshold condition $a \leq a_0$, where $a_0 = \frac{1}{A(2e)^\tau}$ guarantees both the convergence of the normal form construction and a strictly positive optimal normalization order $N_{\text{opt}} \geq 2$. 

Then, there exists a near-identity polynomial change of coordinates to normal form such that for any initial state $z^{(0)} \in \mathcal{D}_{a/2}$, the orbital radii $r_j^{(k)} = |z_j^{(k)}|$ under the $k$-th iteration of the map $(z^{(k)},\bar{z}^{(k)}) = G^k(z^{(0)},\bar{z}^{(0)})$ remain tightly bounded. Specifically, the trajectory remains confined to the domain $\mathcal{D}_a$, satisfying
\begin{equation}
    |r_i^{(k)} - r_i^{(0)}| < \frac{a}{2} 
\end{equation}
for all degrees of freedom $i = 1,\ldots,n$, provided the number of map iterations $k$ does not exceed the effective stability time, i.e., $k \leq T_{\text{eff}}(a)$, which is explicitly given by
\begin{equation}
T_{\text{eff}}(a) = C \exp \left( \tau \left\lfloor \frac{1}{e} \left( \frac{1}{Aa} \right)^{\frac{1}{\tau}} \right\rfloor \right)
\end{equation}
where, $A = \frac{4}{\rho \gamma}$ and $C = \frac{\rho}{4\epsilon_2}$ are strictly positive constants. 
\end{theorem}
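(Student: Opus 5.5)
The argument is a standard bootstrap that adds up one-step drifts. First I fix the normalization. Given $a\le a_0$, Proposition~6 (Optimal Truncation) supplies $N_{\text{opt}}\ge 2$. I then run the construction of Section~2 up to order $N_{\text{opt}}$, with $\delta=\rho/(6N_{\text{opt}})$. I assume the threshold hypotheses of Lemmas~2--4 hold for this $N$; the statement's claim that $a\le a_0$ guarantees convergence of the construction is exactly that assumption. Lemma~5 and Proposition~6 then give, on $\mathcal{D}_a$,
\begin{equation}
\|R^{(N_{\text{opt}})}\|_a \le \epsilon_2\Bigl(\frac{2a}{\rho}\Bigr)\exp(-\tau N_{\text{opt}}) \eqqcolon \eta(a).
\end{equation}
Because $a<\rho/2$, the conjugated map $G^{(N_{\text{opt}})} = F^{(N_{\text{opt}})}+R^{(N_{\text{opt}})}$ is analytic on $\mathcal{D}_a\subset\mathcal{D}_{\rho/2}$. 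I will work with $G^{(N_{\text{opt}})}$ in normal-form coordinates, and read the iterates $G^k$ in the statement as iterates of this conjugated map.

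Next comes the inductive confinement. Take $z^{(0)}\in\mathcal{D}_{a/2}$, so $r_i^{(0)}<a/2$. The claim to prove by induction on $k$ is: if $k\le T_{\text{eff}}(a)$, then every iterate $z^{(m)}$ with $m\le k$ lies in $\mathcal{D}_a$, and $|r_i^{(m)}-r_i^{(0)}|\le m\,\eta(a)$.

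The inductive step goes as follows. Suppose $z^{(m)}\in\mathcal{D}_a$. Then Proposition~7 (Radial Drift Bound) applies at $z^{(m)}$ and gives $|r_i^{(m+1)}-r_i^{(m)}|\le\eta(a)$. Telescoping these one-step bounds and using the triangle inequality gives $|r_i^{(m+1)}-r_i^{(0)}|\le (m+1)\eta(a)$.

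The bound on $T_{\text{eff}}(a)$ then closes the induction. Since $a\le\rho$,
\begin{equation}
(m+1)\,\eta(a)\le T_{\text{eff}}(a)\,\eta(a) = \frac{\rho}{4\epsilon_2}\,e^{\tau N_{\text{opt}}}\cdot \epsilon_2\frac{2a}{\rho}e^{-\tau N_{\text{opt}}} = \frac{a}{2}.
\end{equation}
Hence $r_i^{(m+1)}<a/2+a/2=a$ for every $i$, so $z^{(m+1)}\in\mathcal{D}_a$ and the induction continues. The same computation yields the claimed inequality $|r_i^{(k)}-r_i^{(0)}|<a/2$. Strictness comes from the strict inequality $r_i^{(0)}<a/2$, or from the strict inequalities already present in Lemma~5.

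The main obstacle is making sure Proposition~7 really applies at every step. Proposition~7 is stated for points of $\mathcal{D}_a$, but the polydisc $\mathcal{D}_a$ lives in all $2n$ complex coordinates $(z,\bar z)$. The conjugate variables are controlled only because the orbit stays real: the transformations $c_k$ are built from conjugate eigenvector pairs, so the real phase space corresponds to $\bar z$ being the complex conjugate of $z$. With that, $|\bar z_i| = r_i$, and a radius bound is the same as membership in $\mathcal{D}_a$. I would verify this reality structure first, and use it to justify that the single-step bound can be re-applied at each iterate in the induction.
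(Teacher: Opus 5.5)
Your proposal is correct and follows the paper's proof essentially step for step: fix $N_{\text{opt}}$ by Proposition 6 (taking the Lemma 2--4 thresholds as given), bound the remainder on $\mathcal{D}_a$ by $\epsilon_{\text{opt}}=\epsilon_2(2a/\rho)e^{-\tau N_{\text{opt}}}$, telescope the one-step drift of Proposition 7 under the inductive hypothesis that earlier iterates stay in $\mathcal{D}_a$, and use $T_{\text{eff}}(a)\,\epsilon_{\text{opt}}=a/2$; your remark that the reality symmetry is needed to control the $\bar z$ components is a point the paper leaves implicit. The one slip is your justification of strictness: at $k=T_{\text{eff}}(a)$ the telescoping gives only $|r_i^{(k)}-r_i^{(0)}|\le a/2$, because $r_i^{(0)}<a/2$ gives strict membership in $\mathcal{D}_a$ but not a strict drift bound, and Lemma 5 contains no strict inequalities. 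The paper's proof has the same boundary issue, since it derives $k<a/(2\epsilon_{\text{opt}})$ and then allows $k\le T_{\text{eff}}(a)$.
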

\begin{proof}
The structure of the proof is as follows. First, we will argue on the validity of the optimal normalization. Next, we start from single-step drift, implement the optimal remainder bound, and accumulate the drift over $k$ iterations of the map to compute the $k$-th step drift accumulation. Finally, we prove the effective stability time. 

Let the target confinement radius $a$ be fixed such that $a \leq a_0 = \frac{1}{A(2e)^{\tau}}$. By Proposition 6, this algebraic condition guarantees that the continuous minimum of the remainder occurs at $N_{\text{opt}} \geq 2$. Further, $a \leq a_0$ ensures that the initial macroscopic perturbation $\epsilon_2 = \|R^{(2)}\|_\rho$ is sufficiently small to satisfy the strict local inductive threshold condition required by Lemma 4. So, by Lemmas 2 and 3, the sequence of near-identity coordinate transformations $c_k(z,\bar{z})$ for $k = 2,\ldots,N_{\text{opt}}$ is topologically valid and analytically bounded on the nested sequence of shrinking polydiscs. 

Following the $N_{\text{opt}}$-th normalization step, we evaluate the remainder of the Poincar\'e map on the physical target domain $\mathcal{D}_a$. By Lemma 5, the spatial projection from the macroscopic domain $\rho/2$ down to $a$ introduces a geometric decay factor. Applying the optimization from Proposition 6, the norm of the remainder is strictly bounded by an exponentially small quantity:
\begin{equation}
\|R^{(N_{\text{opt}})}\|_a \leq \epsilon_2 \left( \frac{2a}{\rho} \right) \exp(-\tau N_{\text{opt}}) \eqqcolon \epsilon_{\text{opt}}
\end{equation}

Let $z^{(0)} \in \mathcal{D}_{a/2}$ be the initial condition in the normal form coordinates, which physically requires that the initial orbital radii satisfy
\begin{equation}
    r_i^{(0)} \leq a/2 \qquad \forall i=1,\ldots,n
\end{equation}
By Proposition 7, the one-step drift in the radius under the discrete normalized map $G^{(N_{\text{opt}})}$ is governed exclusively by the remainder, as the integrable twist map perfectly preserves the moduli of the coordinates. Thur, for any state in $\mathcal{D}_a$, the drift over a single crossing of the Poincar\'e map is bounded by
\begin{equation}
    \left| r_i^{(m+1)} - r_i^{(m)} \right| \leq \epsilon_{\text{opt}}
\end{equation}
Assume inductively that the trajectory $(z^{(m)},\bar{z}^{(m)})$ remains confined within $\mathcal{D}_a$ for all steps $m < k$. The total accumulated drift after $k$ iterations is then bounded by, applying the triangle inequality, 
\begin{equation}
    \left| r_i^{(k)} - r_i^{(0)} \right| \leq \sum_{m=0}^{k-1} \left| r_i^{(m+1)} - r_i^{(m)} \right| \leq k \epsilon_{\text{opt}}
\end{equation}

For the trajectory to remain confined strictly to $\mathcal{D}_a$, the total accumulation must be strictly less than the minimal distance from the initial domain $\mathcal{D}_{a/2}$ to the boundary of $\mathcal{D}_a$, i.e., $a-a/2 = a/2$. Therefore, the topological confinement is rigorously guaranteed for any number of iterations $k$ satisfying
\begin{equation}
    k\epsilon_{\text{opt}} < \frac{a}{2} \implies k < \frac{a}{2\epsilon_{\text{opt}}}
\end{equation}
Hence, the effective stability time is defined by
\begin{align}
    T_{\text{eff}}(a) &\coloneqq \frac{a}{2\epsilon_{\text{opt}}} \\
    &= \frac{a}{2\left[ \epsilon_2 \left( \frac{2a}{\rho} \right) \exp(-\tau N_{\text{opt}}) \right]} \\
    &= \frac{\rho}{4\epsilon_2} \exp(\tau N_{\text{opt}}) \\
    &= C \exp \left( \tau \left\lfloor \frac{1}{e} \left( \frac{1}{Aa} \right)^{\frac{1}{\tau}} \right\rfloor \right),
\end{align}
as was to be shown. 
\end{proof}

Theorem 8 provides a powerful temporal guarantee, establishing the instability in the immediate vicinity of the elliptic fixed point is an exceptionally slow process. However, for practical applications in astrodynamics, it is equally important to understand the spatial implications of this temporal bound. If a trajectory is guaranteed to remain stable for an exponentially long time, what is the precise region that contains it during that time? We address this, following in the tradition of many authors, in defining the effective stability region. By restricting the initial condition to a slightly smaller concentric subdomain, the slow radial drift guaranteed by Theorem 8 ensures that the trajectory will never reach the outer boundary of the valid normal form domain before the effective stability time. 

Corollary 9 formally translates the temporal bound of Theorem 8 into this spatial confinement guarantee, defining the strict physical envelope within which the local dynamics remain predictably bounded. 

\begin{corollary}[Confinement Region]
Let $T > C$ be a prescribed minimum effective stability time, representing the desired number of iterations of the Poincar\'e map. To strictly guarantee confinement of the discrete trajectory for all $k \leq T$, the target confinement radius $a$ must be bounded from above by
\begin{equation}
    a(T) \leq \frac{1}{Ae^{\tau}\left(1 + \frac{1}{\tau} \ln \left(\frac{T}{C}\right)\right)^\tau}
\end{equation}
where $A = \frac{4}{\rho \gamma}$ and $C = \frac{\rho}{4\epsilon_2}$. 
\end{corollary}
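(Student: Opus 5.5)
We invert the explicit stability time of Theorem 8. That theorem guarantees confinement for all $k \leq T_{\text{eff}}(a) = C\exp(\tau N_{\text{opt}}(a))$, where $N_{\text{opt}}(a) = \lfloor x(a)\rfloor$ and
\begin{equation}
    x(a) \coloneqq \frac{1}{e}\left(\frac{1}{Aa}\right)^{1/\tau}.
\end{equation}
So it suffices to find a condition on $a$ ensuring $T_{\text{eff}}(a) \geq T$. Since $x(a)$ is strictly decreasing in $a$, any admissible $a$ below a threshold inherits the bound. Confinement for $k\leq T$ therefore holds as soon as
\begin{equation}
    \tau \lfloor x(a) \rfloor \geq \ln(T/C), \qquad \text{i.e.}\qquad \lfloor x(a)\rfloor \geq \frac{1}{\tau}\ln\left(\frac{T}{C}\right).
\end{equation}
The right-hand side is positive because $T > C$.

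The only delicate step is the floor. We remove it with the elementary inequality $\lfloor x\rfloor > x - 1$. A sufficient condition is then
\begin{equation}
    x(a) \geq 1 + \frac{1}{\tau}\ln\left(\frac{T}{C}\right),
\end{equation}
since then $\lfloor x(a)\rfloor > x(a) - 1 \geq \frac{1}{\tau}\ln(T/C)$. Solving
\begin{equation}
    \frac{1}{e}\left(\frac{1}{Aa}\right)^{1/\tau} \geq 1 + \frac{1}{\tau}\ln(T/C)
\end{equation}
for $a$ means raising both (positive) sides to the power $\tau$ and rearranging. This gives exactly the stated bound
\begin{equation}
    a \leq \left[A e^\tau \left(1 + \tfrac{1}{\tau}\ln(T/C)\right)^\tau\right]^{-1}.
\end{equation}

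Finally, we check that the hypothesis $a \leq a_0 = 1/(A(2e)^\tau)$ of Theorem 8 and Proposition 6 is compatible with this bound. For $T$ large, namely when $1 + \frac{1}{\tau}\ln(T/C) \geq 2$, the new bound is automatically stronger than $a\leq a_0$, so the theorem applies. For $C < T < Ce^\tau$, one should additionally impose $a\leq a_0$; equivalently, the confinement radius is the minimum of the two bounds. We would note this explicitly as a remark rather than an obstacle.

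The main obstacle is really just the floor in $N_{\text{opt}}$, and the ``$1+$'' term in the statement is precisely the price of the estimate $\lfloor x\rfloor > x-1$.
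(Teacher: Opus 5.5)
Your proposal is correct and follows the paper's proof essentially verbatim. Like the paper, you require $T_{\text{eff}}(a)\ge T$, remove the floor via $\lfloor x\rfloor > x-1$, and solve the resulting inequality for $a$. Your extra observation goes beyond the paper: for $C<T<Ce^{\tau}$ the hypothesis $a\le a_0$ of Theorem 8 is not implied by the stated bound and must be imposed separately, so the admissible radius is the minimum of the two. This is a valid point that the paper's proof does not address.
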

\begin{proof}
To guarantee confinement for at least $T$ iterations, we require $T_{\text{eff}}(a) \geq T$. For a continuous bound (that is simpler to use in the example of the following section), we use $\lfloor x \rfloor > x - 1$. By this inequality and Theorem 8, we have
\begin{equation}
    T_{\text{eff}}(a) > C\exp(\tau(x-1)) = Ce^{-\tau}\exp(\tau x), \qquad x = \frac{1}{e} \left( \frac{1}{Aa} \right)^{\frac{1}{\tau}}
\end{equation}
Enforcing the stricter condition leads to:
\begin{equation}
    Ce^{-\tau} \exp(\tau x) \geq T
\end{equation}
Rearranging and solving for $a$ yields
\begin{equation}
    a \leq \frac{1}{Ae^{\tau}\left(1 + \frac{1}{\tau} \ln \left(\frac{T}{C}\right)\right)^\tau}
\end{equation}
as desired. 
\end{proof}

With the establishment of the effective stability time and its associated spatial confinement region, the theoretical framework of our local analysis is complete. The results of this section guarantee that trajectories initialized sufficiently close to an elliptic fixed point of a Poincar\'e map will not experience sudden chaotic diffusion, but rather are strictly bounded by an exponentially slow radial drift. However, while Theorem 8 and Corollary 9 provide rigorous analytical guarantees, these bounds are expressed in terms of the abstract geometric properties of the normal form and the optimal truncation order $N_{\text{opt}}$. To understand the true physical scale of these confinement regions for Near-Rectilinear Halo Orbits in the Earth-Moon system, we must evaluate these series numerically. In the following section, we transition from theoretical estimates to computational implementation, applying this mathematical framework directly to the elliptic sub-family of NRHOs to extract quantitative, physically meaningful effective stability regions. 

%%%%%%%%%%%%%%%%%%%%%%%%%%%%%%%%%%%%%%%%%%%%%%%%%%%%%%%%%%%%%%%%%
% V. Application to the Earth-Moon Near-Rectilinear Halo Orbits %
%%%%%%%%%%%%%%%%%%%%%%%%%%%%%%%%%%%%%%%%%%%%%%%%%%%%%%%%%%%%%%%%%
\section{Application to the Earth-Moon $L_2$ Near-Rectilinear Halo Orbits}
Having established the rigorous analytical framework for the effective stability of discrete Poincar\'e maps via the normal form construction detailed in the previous sections, we now apply these bounds to a concrete astrodynamical scenario. In this section, we investigate the effective stability of the linearly stable Earth-Moon $L_2$ Near-Rectilinear Halo Orbits (NRHOs) in the Circular Restricted 3-Body Problem (CR3BP). 

Since the NRHOs exist within a highly sensitive dynamical regime characterized by adjacent regions of strong instability and close lunar passages, bounding the long-term behavior of trajectories in their vicinity is a delicate nonlinear problem. To transition our theoretical estimates into computable physical bounds, we utilize jet transport to expand the Poincar\'e return map of a chosen NRHO up to an arbitrary truncation order $N$. For the spatial CR3BP, the reduction to a Poincar\'e section yields a 4-dimensional symplectic map, allowing us to set the Diophantine exponent to $\tau = 2$. 

By extracting the exact Taylor coefficients via jet transport, we can systematically evaluate the analytical constants required by the iterative lemma. Specifically, we compute the domain of analyticity $\rho$ via the (Cauchy-Hadamard) root test, empirically bound the smallest non-resonant divisor up to order $N$, and saturate the remainder bound $\epsilon_2$. This permits the direct computation of the strict confinement radius $a(T)$ as a function of discrete orbital revolutions $T$, mathematically guaranteeing a stability envelope around the chosen NRHO. 

The primary objective of this numerical application is to demonstrate that the majorant series derived herein do not merely provide asymptotic existence proofs, but rather yield physically meaningful, macroscopic stability volumes over baseline mission durations. We target a nominal mission lifetime of $T_{\text{eff}} = 15$ years to show that a well-chosen truncation order guarantees strict confinement within practical operational tolerances. 

\subsection{Dynamical Model and Reference Orbit}
To evaluate the effective stability bounds numerically, we consider the spatial Circular Restricted 3-Body Problem (CR3BP) applied to the Earth-Moon system. The primary bodies, Earth and Moon, are assumed to move in circular orbits about their common barycenter. We use the standard synodic rotating frame, normalizing the distance between the primaries, their total mass, and the system's mean motion to unity. The motion of the satellite is governed by the equations of motion
\begin{align}
    \ddot{x} - 2\dot{y} &= \frac{\partial U}{\partial x}, \nonumber \\
    \ddot{y} + 2\dot{x} &= \frac{\partial U}{\partial y}, \\
    \ddot{z} &= \frac{\partial U}{\partial z}, \nonumber
\end{align}
where the effective potential is given by $U(x,y,z) = \frac{1}{2}(x^2 + y^2) + \frac{1-\mu}{r_1} + \frac{\mu}{r_2}$, where we place the Earth at $x = -\mu$ and the Moon at $x = 1-\mu$ so that
\begin{equation}
    r_1 = \sqrt{(x+\mu)^2 + y^2 + z^2}, \quad r_2 = \sqrt{(x-1+\mu)^2 + y^2 + z^2}
\end{equation}
and where we use the Earth-Moon mass ratio provided by the Jet Propulsion Laboratory (NASA/Caltech):
\begin{equation}
    \muem = 0.01215058560962404.
\end{equation}

To cast the continuous flow as a discrete symplectic map suitable for normal form analysis, we construct a Poincar\'e surface of section $\Sigma$. For the $L_2$ NRHOs, it is convenient to define this section at apoapsis, i.e., $\{ y = 0, \, \dot{y} < 0 \}$, to capture the state once per orbital revolution. Additionally, fixing the Jacobi constant at $C_0$ of the periodic orbit reduces the map to a 4-dimensional symplectic map. The continuous trajectories of the CR3BP are integrated between successive crossing of $\Sigma$ using jet transport, which generates a high-order Taylor expansion of the return map $G(s)$ mapping an initial variation $s \in \Sigma$ to its subsequent return. 

For this numerical application, we select a specific, linearly stable (i.e., normally elliptic) NRHO belonging to the $L_2$ halo orbit family, shown in Figure \ref{fig:ExampleFigure}; without loss of generality, we take from the Southern $L_2$ halo orbit family, representative of orbits near the 9:2 synodic resonant orbit slated as Gateway's nominal trajectory. The fixed point of this periodic orbit on the surface of section, the period, Jacobi constant $C$, and the Floquet exponents $\lambda_{1,2} = e^{\pm \imag\alpha_1}, \lambda_{3,4} = e^{\pm \imag\alpha_2}$ are presented in Table \ref{tab:OrbitData}. Note that the fixed point data was originally taken from the JPL periodic orbit database \cite{ssd_periodic_orbits_2026}, then corrected to higher precision. Further, note that the period of the orbit is about 10 days (9.99\ldots). 

\begin{table}[ht]
    \centering
    \[\begin{array}{|c|l|}
    \hline
       x _0  & \texttt{ 1.07442699836894799731734842966765573872} \\
       y _0  & \texttt{ 0} \\
       z _0 &\texttt{-2.02077758410419468689345932453312581546e-1} \\
       \dot x _0 & \texttt{ 5.38129566215790323615561941446878151138e-37} \\
       \dot y _0 & \texttt{-1.91460361329368351175536906288136125112e-1} \\
       \dot z _0 & \texttt{ 1.05394660456707887877017392859740255247e-36} \\ \hline
       T & \texttt{ 2.25450199926392082895876613944056856405} \\
       C & \texttt{ 3.0159} \\\hline
       \alpha _1 & \texttt{ 2.335685763037916} \\\hline
       \alpha _2 & \texttt{ 1.571477893259234} \\\hline
    \end{array}\]
    \caption{Elliptic fixed point using 124 bits (37 digits), newton tolerance of $10^{-35}$, fix Jacobi constant $C$, and $\mu = \muem$; and $\alpha _{1,2}$ are the Floquet exponents in $[-\pi,\pi)$. Note that $\alpha_2 = \frac{\pi}{2} + \mathcal{O}(10^{-4})$.} \label{tab:OrbitData}
\end{table}

\begin{figure}[ht]
    \centering
    \begin{subfigure}{.45\textwidth}
        \centering
        \includegraphics[width=\linewidth]{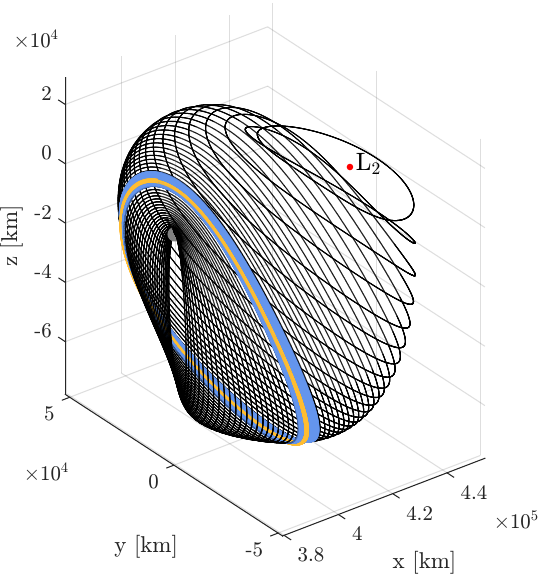}
        \caption{EM $L_2$ Southern Halo Orbits}
        \label{fig:OrbitPlot_ChosenOrbit}
    \end{subfigure}%
    \hfill
    \begin{subfigure}{.53\textwidth}
        \centering
        \includegraphics[width=\linewidth]{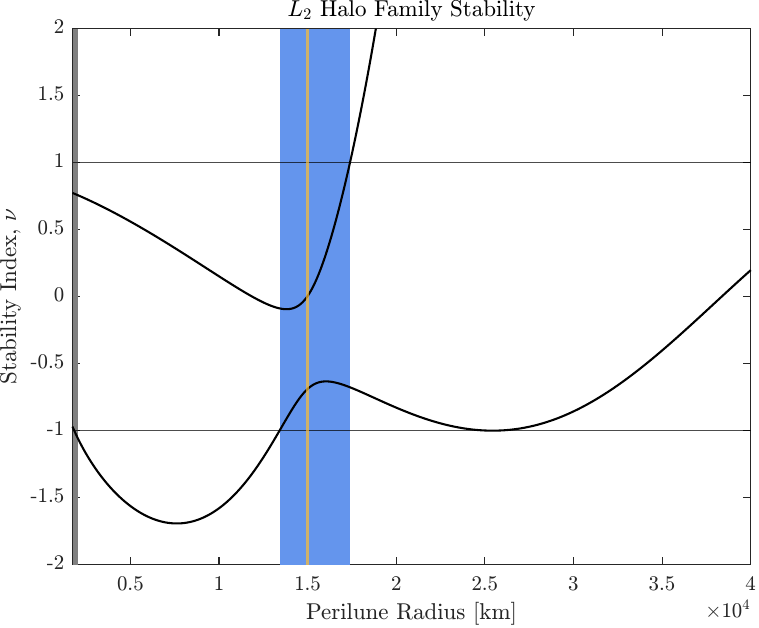}
        \caption{Stability of $L_2$ Halo Orbits}
        \label{fig:StabilityPlot_ChosenOrbit}
    \end{subfigure}
    \caption{Selected $L_2$ Southern NRHO for effective stability computation (orange), surrounded by blue region of normally elliptic NRHOs.}
    \label{fig:ExampleFigure}
\end{figure}

\subsection{Effective Stability Computation}
To demonstrate the practical application of our theoretical framework, we evaluate the effective stability bounds for a selected Earth-Moon $L_2$ Southern NRHO with a period of approximately 10 days. The objective is to compute the confinement region $a(T)$ that guarantees a stability boundary for a desired mission lifetime $T$, translating the Nekhoroshev-like estimates into physical spatial coordinates. 

The computation of the effective stability region relies on evaluating the sequence of intermediate variables defined by our rigorous bounds in Section 4. First, the Taylor expansion of the unperturbed Poincaré map $G(s)$ is computed to a high order. From these numerical coefficients, we extract the necessary constants the optimal remainder bound: 
\begin{itemize}
    \item \textbf{Analyticity Domain:} The radius of the polydisc on which the map is analytic is estimated using the Cauchy-Hadamard root test, $\rho \approx \|G^N\|^{-1/N}$. We choose $N$ sufficiently large such that $\rho$ flattens out, as the approximation can suffer at low $N$. 
    \item \textbf{Cumulative Small Divisor:} The uniform Diophantine constant required for the bound at order $N$ is computed as the minimum of all small divisors encountered up to that step:
    \begin{equation}\label{eqn:SmallDivisor}
        \gamma = \min_{\substack{2 \le |j| \le N \\ i \in \{1, \dots, 2n\} \\ \text{non-resonant}}} \Big( |\lambda_i - \lambda^j| \cdot |j|^\tau \Big)
    \end{equation}
    \item \textbf{Iterative Lemma Constant:} The constant $\epsilon_2$ is computed by the formula of the iterative lemma:
    \begin{equation}
        \epsilon_2 \leq \frac{L}{Q(1+2L)^{N-2}}, \quad L \coloneqq \frac{N^\tau}{\gamma}, \quad Q \coloneqq \frac{2N^{\tau+1}}{\rho \gamma} + \frac{8 N^{2\tau+1}}{\rho \gamma^2},
    \end{equation}
    with $N$ the truncation order, and $\tau$, $\gamma$, and $\rho$ previously computed. We will use that $\epsilon_2$ is equal to the right-hand side, not $\leq$ to compute the largest possible guaranteed stability region. 
\end{itemize}
Putting together the above intermediate variables, we compute the final constants used in the explicit bound:
\begin{itemize}
    \item \textbf{Majorant Coefficient:} The constant governing the growth of the remainder at order $N$ is given by:
    \begin{equation}
        A = \frac{4}{\rho \gamma}
    \end{equation}
    \item \textbf{Characteristic Nekhoroshev Accumulation Time:} Finally, the constant $C$ is compute as:
    \begin{equation}
        C = \frac{\rho}{4\epsilon_2}
    \end{equation}
\end{itemize}
To extract the strongest possible finite-time stability guarantee, we must halt the normalization procedure at the optimal truncation order $N_{\text{opt}}$. This is defined as the order that minimizes the coefficient $A_k$, thereby minimizing the overall bound on the truncated remainder. As illustrated in Figure \ref{fig:SmallDivisor}, the small divisor $\gamma_k$ takes its minimum at $N=3$ due to a low-order resonance and remains flat for subsequent orders. Simultaneously, shown in Figure \ref{fig:RoC_G}, the sequence $\rho_k$ generally decreases before asymptoting toward the true fixed radius of convergence $\rho$. Note that the minimum occurs at $k = 15$, and, since the limit infimum is taken as the minimum, we take this value for $\rho$. Because $\gamma_k$ remains constant after $N = 3$, the constant $A_k$ naturally increases as the numerical estimate of $\rho$ improves. Therefore, $N_{\text{opt}}$ is effectively determined by the order at which the sequence $\rho_k$ flattens out, provided no deeper resonances are encountered. 

With $N_{\text{opt}}$, $C$, and the minimized constant $A$ identified and explicitly computed, we apply Theorem 8 and Corollary 9 to compute the confinement region. The formula in Corollary 9 works if we consider time scales on the order of $T > C$. For our orbit, $C \gtrsim \mathcal{O}(10^{50})$, which exceeds the known lifetime of the universe. Accordingly, and as we are interested in timespans on the order of mission lifetimes (e.g., 10-50 years), we apply the threshold condition of Proposition 6, which effectively replaces the $\frac{1}{\tau}\ln(T/C)$ with 1. Hence, we use the condition:
\begin{equation}
    a(T) \leq \begin{cases} 
        \frac{1}{Ae^\tau (1 + \frac{1}{\tau}\ln(\frac{T}{C}))^\tau}, & T \geq C \\
        \frac{1}{A(2e)^\tau}, & \text{otherwise}
    \end{cases}
\end{equation}

\begin{figure}[ht]
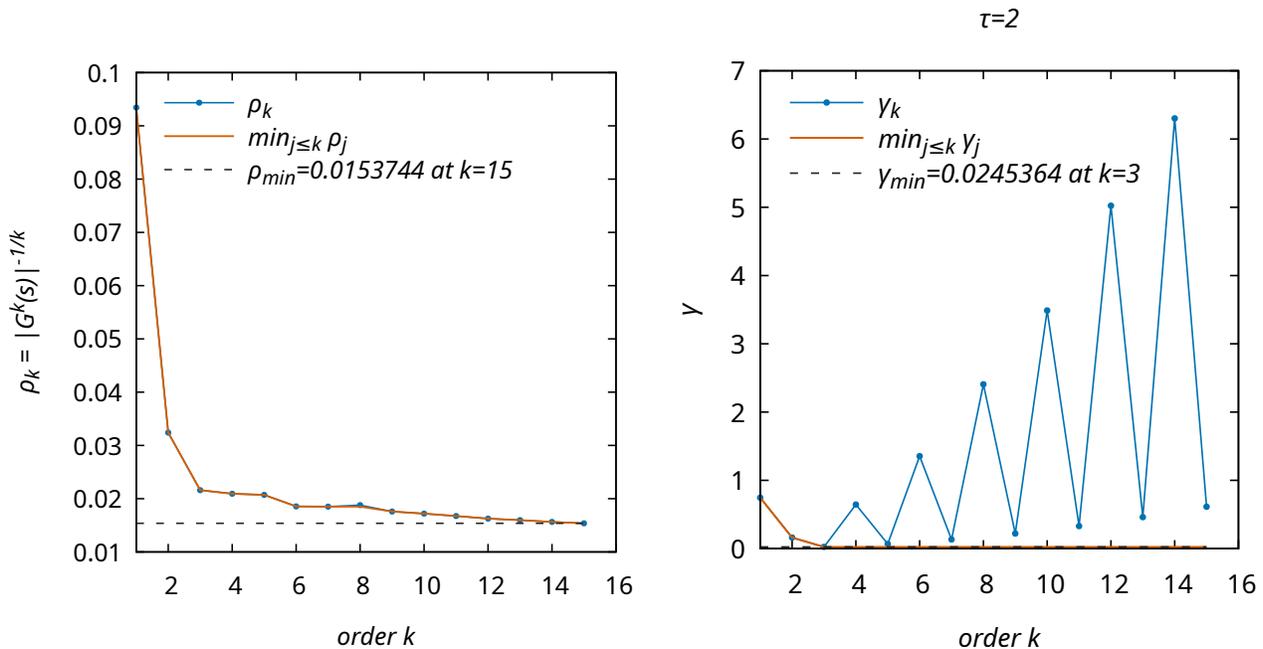

    \centering
    \begin{subfigure}{.5\textwidth}
        \centering
        \includegraphics[width=\linewidth,page=2]{jetpfix15_values-crop.pdf}
        \caption{Radius of convergence of the un-normalized Taylor series $G(s)$ depending on the truncation order $k$, approximated using the Cauchy-Hadamard root test.}
        \label{fig:RoC_G}
    \end{subfigure}%
    \hfill
    \begin{subfigure}{.46\textwidth}
        \centering
        \includegraphics[width=\linewidth,page=3]{jetpfix15_values-crop.pdf}
        \caption{Diophantine constant $\gamma$ computed numerically as in Equation \ref{eqn:SmallDivisor} depending on the normalization order $k$ as the rolling minimum of the $\gamma_k$.}
        \label{fig:SmallDivisor}
    \end{subfigure}
    \caption{Computational results of radius of convergence and Diophantine constant depending on the order $k$.}
    \label{fig:ComputationSetup}
\end{figure}

Figure \ref{fig:StabilityRadius} shows the intermediate computation of the majorant coefficient $A$, as well as the effective stability radius, $a$ as a function of the order $k$. Figure \ref{fig:StabilityRegion} visualizes the confinement threshold in normal form coordinates, as well as physical spatial coordinates taken on the Poincaré section. Because operational timescales are vastly shorter than the Nekhoroshev constant ($T \ll C$), the exponential diffusion mechanism is entirely negligible. Consequently, the effective stability region for realistic mission lifetimes is not constrained by time-dependent drift, but is instead entirely bounded by the maximum analytical valid domain of the normal form itself. Thus, we find that for practical mission durations, the theoretical stability bounds intrinsically exceed operational needs, rendering this constant maximum radius the strict dynamical boundary of effective stability within the idealized CR3BP.

\begin{figure}[ht]
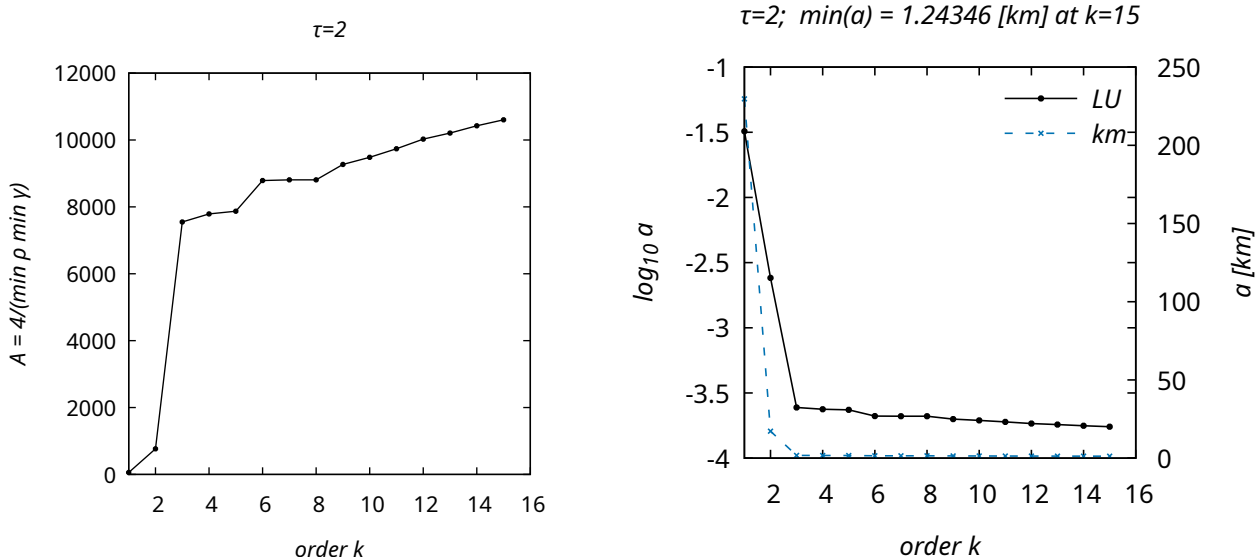

    \centering
    \begin{subfigure}{.43\textwidth}
        \centering
        \includegraphics[width=\linewidth,page=5]{jetpfix15_values-crop.pdf}
        % \caption{Radius of convergence of the un-normalized Taylor series $G(s)$ depending on the truncation order $k$, approximated using the Cauchy-Hadamard root test.}
        \label{fig:A}
    \end{subfigure}%
    \hfill
    \begin{subfigure}{.5\textwidth}
        \centering
        \includegraphics[width=\linewidth,page=7]{jetpfix15_values-crop.pdf}
        % \caption{Diophantine constant $\gamma$ computed numerically as in Equation \ref{eqn:SmallDivisor} depending on the normalization order $k$ as the rolling minimum of the $\gamma_k$.}
        \label{fig:a}
    \end{subfigure}
    \caption{Computational results of majorant coefficient, $A$, and effective stability radius, $a$, as a function of the order of normalization, $k$.}
    \label{fig:StabilityRadius}
\end{figure}

\begin{figure}[ht]
    \centering
\setlength{\tabcolsep}{0.5em}
\begin{tabular}{@{}ccc@{}}
  \includegraphics[width=.32\textwidth,valign=t,page=1]{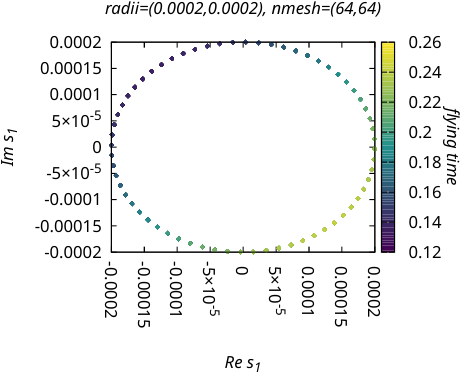} &
  \includegraphics[width=.32\textwidth,valign=t,page=2]{eval15-crop.pdf} &
  \includegraphics[width=.25\textwidth,valign=t,page=7]{eval15-crop.pdf} \\
  \\
  \includegraphics[width=.32\textwidth,valign=t,page=9]{eval15-crop.pdf} &
  \includegraphics[width=.32\textwidth,valign=t,page=10]{eval15-crop.pdf} &
  \includegraphics[width=.25\textwidth,valign=t,page=8]{eval15-crop.pdf}
\end{tabular}
    \caption{An approximated solution around the chosen $L_2$ Southern NRHO in the Earth-Moon system in Table~\ref{tab:OrbitData} on the Poincaré section $\Sigma$. First row in Normal Form coordinates, and second row in E-M coordinates.}
    \label{fig:StabilityRegion}
\end{figure}

\begin{table}[ht]
    \centering
    {\small 
    \begin{tabular}{c|c|cccc|c}
    $k$ & $\gamma _k = \min |\lambda _i - \lambda ^j| k ^\tau $ & $j _1$ & $j _2$ & $j _3$ & $j _4$ & $i$ \\ \hline
  1 & \texttt{ 7.457469939031671e-01} & \texttt{ 1} & \texttt{ 0} & \texttt{ 0} & \texttt{ 0} & \texttt{3} \\
 2 & \texttt{ 1.613326139608544e-01} & \texttt{ 1} & \texttt{ 0} & \texttt{ 1} & \texttt{ 0} & \texttt{2} \\
 3 & \texttt{ 2.453638511751952e-02} & \texttt{ 0} & \texttt{ 0} & \texttt{ 3} & \texttt{ 0} & \texttt{4} \\
 4 & \texttt{ 6.453304558434056e-01} & \texttt{ 3} & \texttt{ 0} & \texttt{ 1} & \texttt{ 0} & \texttt{1} \\
 5 & \texttt{ 6.815662532644190e-02} & \texttt{ 1} & \texttt{ 0} & \texttt{ 4} & \texttt{ 0} & \texttt{1} \\
 6 & \texttt{ 1.353866595655012e+00} & \texttt{ 1} & \texttt{ 0} & \texttt{ 5} & \texttt{ 0} & \texttt{2} \\
 7 & \texttt{ 1.335869856398272e-01} & \texttt{ 1} & \texttt{ 2} & \texttt{ 4} & \texttt{ 0} & \texttt{2} \\
 8 & \texttt{ 2.406873947831132e+00} & \texttt{ 2} & \texttt{ 1} & \texttt{ 5} & \texttt{ 0} & \texttt{2} \\
 9 & \texttt{ 2.208274660576034e-01} & \texttt{ 0} & \texttt{ 0} & \texttt{ 7} & \texttt{ 2} & \texttt{3} \\
10 & \texttt{ 3.488158750002063e+00} & \texttt{ 1} & \texttt{ 0} & \texttt{ 9} & \texttt{ 0} & \texttt{2} \\
11 & \texttt{ 3.298780665798766e-01} & \texttt{ 0} & \texttt{ 0} & \texttt{ 7} & \texttt{ 4} & \texttt{4} \\
12 & \texttt{ 5.022948600002852e+00} & \texttt{ 2} & \texttt{ 0} & \texttt{10} & \texttt{ 0} & \texttt{3} \\
13 & \texttt{ 4.607387872066040e-01} & \texttt{ 1} & \texttt{ 2} & \texttt{ 7} & \texttt{ 3} & \texttt{2} \\
14 & \texttt{ 6.302518131121055e+00} & \texttt{ 1} & \texttt{ 0} & \texttt{13} & \texttt{ 0} & \texttt{2} \\
15 & \texttt{ 6.134096279377864e-01} & \texttt{ 2} & \texttt{ 2} & \texttt{ 7} & \texttt{ 4} & \texttt{4}
    \end{tabular}}
    \caption{Explicit values of $\gamma_k$ (with $\tau =2$), multi-index $j$, and index $i$ for which the first non-resonant minimum is attained. Notice that because $|\lambda _i|=1$, the comparison for the minimum can be done in the angles of the eigenvalues in Table~\ref{tab:OrbitData}.}
    \label{tab:gammak}
\end{table}

In our numerical evaluation, the effective stability bounds and the optimal truncation order $N_{\text{opt}}$ are fundamentally dictated by the presence of a prominent low-order resonance. Specifically, the selected NRHO possesses a center mode with a Floquet exponent of approximately $\alpha_2 = \frac{\pi}{2} + \mathcal{O}(10^{-5})$. Consequently, the map's eigenvalues nearly satisfy the strong 1:4 normal resonance condition $\lambda_4 \approx \lambda_3^3$ (as $e^{-\frac{i\pi}{2}+\cdots} \approx e^{\frac{i3\pi}{2}+\cdots}$), as seen in Table \ref{tab:gammak}. This commensurability generates a severe small divisor early in the normalization process, which dominates the cumulative Diophantine constant $\gamma_k$ from order 3 onward. Moreover, this early normal resonance directly affects the optimized Nekhoroshev remainder and governs the finite-time stability bounds in the CR3BP. 

However, while this strong normal resonance condition dictates the effective stability limits in the autonomous system, it also reveals a critical dynamical insight for transitioning to higher-fidelity models. When attempting to transition this specific region of the NRHO band into the Elliptic Restricted 3-Body Problem (ER3BP), as in \cite{park2024characterizing}, the periodic orbit expands into a 2-dimensional invariant torus (for sufficiently low $e$ \cite{jorba_normal_1997}). The time-dependent external forcing associated with the Moon's eccentricity introduces a new base frequency to the system, acting as a second internal frequency governing quasi-periodic motion on the persistent torus. Because the unperturbed normal frequency is a rational multiple of the $2\pi$ period of the ER3BP, it couples with the newly introduced internal frequency. As demonstrated by Broer et al. \cite{BroerHJVW03}, the onset of such normal-internal resonances precipitates complex bifurcations and the rapid breakdown of quasi-periodic structures. Consequently, this specific normal-internal resonance mechanism warrants a more rigorous, dedicated investigation to better understand the boundaries of persisting NRHOs as they transition from the CR3BP into the ER3BP and other quasi-periodically forced intermediate cislunar models.

%%%%%%%%%%%%%%%%%%%
% VI. Conclusions %
%%%%%%%%%%%%%%%%%%%
\section{Conclusions}
In this work, we have rigorously evaluated the effective stability of the elliptic band of Near-Rectilinear Halo Orbits (NRHOs) within the Earth-Moon Circular Restricted 3-Body Problem (CR3BP). By leveraging jet transport, we computed high-order Taylor expansions of the Poincar\'e map centered on a selected orbit within this family. We then constructed discrete, finite-order normal forms using explicit polynomial conjugations to systematically isolate the integrable twist dynamics from the chaotic perturbations. By applying Cauchy estimates to the truncation remainder, we derived rigorous, Nekhoroshev-like bounds on the local dynamics. These analytical bounds provide strict, finite-time guarantees on the escape time of trajectories in the vicinity of elliptic NRHOs, successfully bridging the gap between localized linear stability and global nonlinear chaotic diffusion. 

While the methodology presented here offers a robust framework for quantifying effective stability, it is not without its limitations. First, our reliance on explicit polynomial compositions, while computationally efficient, relaxes the condition of exact symplecticity. Although the resulting artificial dissipation is strictly bounded by the truncation remainder and does not invalidate the Nekhoroshev estimates, it introduces conservative over-estimations in the escape times. Second, the Cauchy majorant techniques used to bound the infinite tail remainder inherently produce worst-case scenarios; the true physical stability of these orbits is likely much longer than the rigorously guaranteed lower bounds. Finally, the CR3BP remains a simplified dynamical model, and the stability boundaries derived herein do not explicitly account for higher-fidelity perturbations such as orbital eccentricity, solar gravity, or solar radiation pressure. 

These limitations naturally pave the way for several promising directions for future research:
\begin{enumerate}
    \item[1.] \textbf{Parameter Continuation:} A natural extension of this work is to track the evolution of these effective stability bounds continuously across the entire band of elliptic NRHOs (made possible through the use of jet transport, as in \cite{gimeno2025explicit},) or to analyze their sensitivity to variations in the mass parameter ($\mu$). Such a continuation study would precisely map out how the nonlinear stability radii expand or contract as the periodic orbits approach major bifurcations or transition into the linearly unstable regime. Additionally, this analysis could be applied to normally elliptic resonant periodic orbits persisting into periodically-perturbed models (e.g., the Elliptic R3BP), combining insights from Melnikov and Nekhoroshev theories. 
    \item[2.] \textbf{KAM Analysis and Perpetual Stability:} While this paper establishes finite-time effective stability, the topological existence of invariant tori bounding the center manifold remains an open question. Future work may seek to transition from Nekhoroshev-like stability bounds to rigorous Computer-Assisted Proofs (CAPs) of KAM tori, potentially utilizing the parameterization method to establish infinite-time stability bounds within specific sub-regions of the NRHOs \cite{cabre2003parameterization,haro2016parameterization,LlaveGJV05,figueras2017rigorous}.
    \item[3.] \textbf{Mission-Relevant Implications:} Most critically, the theoretical bounds derived here have practical implications for cislunar mission design, especially related to Lunar Gateway. By translating the complex radius of our stability estimates into physical position and velocity deviations, we establish rigorous bounds on the local dynamics in the CR3BP, providing a foundational baseline for designing operational corridors in higher-fidelity ephemeris models. Trajectories initialized within these bounds are mathematically proven to remain bounded over mission-relevant timescales (e.g., 10 to 50 years) without exhibiting catastrophic chaotic drift. Future efforts will integrate these nonlinear stability volumes with station-keeping algorithms to quantify the minimum $\Delta V$ required to artificially maintain a spacecraft within the effective stability radius when subjected to full-ephemeris perturbations. 
\end{enumerate}

\section*{Statements and Declarations}
The authors declare that they have no conflict of interest.

\section*{Acknowledgments}
The authors thank Mr. Cade Armstrong for generating Figures 1-2 of this manuscript, as well as Dr. Marc Jorba-Cusc\'o for insightful discussion regarding the canonicity of the normalizing transformations used herein. 

\smallskip

The project has been supported by the Spanish grants
PID2021-125535NB-I00 (MICINN/AEI/FEDER, UE), 
the Catalan grant 2021
SGR 01072, and by the Air Force Office of Scientific
Research under award number FA8655-24-1-7059.  The project that led to these results also received
the support of a fellowship from ``la Caixa'' Foundation (ID
100010434), the fellowship code is LCF/BQ/PR23/11980047.

\bibliographystyle{alpha}
\addcontentsline{toc}{section}{References}
{\small \bibliography{ds,references}}

\appendix

% % Additional comments about incorporating KAM 
% \section{Comments - Incorporating KAM}
% We can also use Sections 2.1-4 and 3 to devise a KAM theorem. These steps are: 
% \begin{enumerate}
%     \item Sections 2.1-4, 3 
%     \item Verify twist condition (which Joan can already do in the code) 
%     \item Smallness condition on the remainder. KAM requires a stronger smallness condition than Nekhoroshev, typically related to the Diophantine constant:
%     \begin{equation}
%         \|R^{(N)}\|_\rho \leq \varepsilon \ll \gamma^a \rho^b,
%     \end{equation}
%     with $\gamma$ the Diophantine constant and $a,b$ constants depending on the dimension and regularity (I think). We don't need to optimize the order of the normal form for KAM, but we need to optimize it for Nekhoroshev. In KAM, the order must be chosen large enough so the remainder is small. 
%     \item Apply a discrete-time KAM theorem, proving convergence of an infinite sequence of coordinate changes. 
%     \item Compute measure estimates for the surviving tori and bounds on the gaps near resonances. 
% \end{enumerate}
% This is likely too much work for the same paper, as 3-5 would be totally new steps for us, on top of all the Nekhoroshev parts. I think we can focus on the Nekhoroshev stability for this paper, and maybe at CELMEC we can discuss with some the KAM people how we can write a KAM paper with this approach. 

% \bibliography{references}% common bib file
%% if required, the content of .bbl file can be included here once bbl is generated
% \input main.bbl

\end{document}